\newtheorem{remark}{Remark}[section]
\title{Sharp Error Bounds for Piecewise Polynomial Approximation: Revisit and Application to Elliptic
PDE Eigenvalue Computation}
\author{Hehu Xie\thanks{LSEC,
Institute of Computational Mathematics, Academy of Mathematics and
Systems Science, Chinese Academy of Sciences, Beijing 100190,
China (hhxie@lsec.cc.ac.cn).}
\and Zhimin Zhang\thanks{Beijing Computational Science Research Center, and
Department of Mathematics, Wayne State University, Detroit, MI 48202, USA
(zzhang@math.wayne.edu).}}
\begin{document}
\maketitle
\begin{abstract}
In this paper, we revisit approximation properties of piecewise polynomial spaces,
which contain more than ${\cal P}_{r-1}$ but not ${\cal P}_r$.
We develop more accurate upper and lower error bounds that are sharper than those used in literature.
These new error bounds, especially the lower bounds are particular useful to finite element methods.
As an important application, we establish sharp lower bounds of the discretization error
for Laplace and $2m$-th order elliptic eigenvalue problems in various finite element spaces
under shape regular triangulations, and investigate the asymptotic convergence behavior for large
numerical eigenvalue approximations.
\end{abstract}
\begin{keywords}
Polynomial approximation, finite element method, lower bound, error estimate, eigenvalue problem
\end{keywords}
\begin{AMS}
65N30, 41A10, 65N15, 65N25, 35J55
\end{AMS}
\pagestyle{myheadings}
\thispagestyle{plain}
\markboth{Hehu Xie and Zhimin Zhang}{Sharp Error Bounds for Polynomial Approximation}

\section{Introduction}
Standard error estimates for numerical methods are conventionally presented as
upper bounds. For example, the error estimate for the finite element method
 is usually written in the following way
 \begin{eqnarray}\label{Upper_Bound_Normal}
 \|u-u_h\|_1 &\leq &Ch^{r-1}\|u\|_r,
 \end{eqnarray}
 where $u_h$ denotes the finite element approximation (of polynomial degree less than $r$) to the exact solution $u$.
 However, this error estimate is not so delicate and sharp for some finite elements such as the tensor product and
 intermediate families. The first aim of this paper is to establish some more delicate and sharp
 upper bounds for piecewise polynomial approximation errors on tensor product meshes by giving
   a new norm equivalence theorem.

The analysis of lower bound error estimates have been studied in the literature
\cite{BabuskaMiller,BabuskaStroulis,KrizekRoosChen,LinXieXu,Widlund1971,Widlund}.
 However, none of them are particularly for eigenvalue approximation.
In particular, when eigenvalue problems are involved,
these upper bounds are tied with the magnitudes of the underlying eigenvalues.
For the same finite element method, larger eigenvalues always have larger error bounds,  the fact was
described in Strang-Fix's 1973 classical book \cite{StrangFix}.
 Inspired by some recent studies on efficiency of finite element approximation for different eigenvalues \cite{Zhang},
this work is also devoted to establishing some lower bound error estimates for finite element
 approximation of eigenvalue problems.
Then, the second aim of this paper is to derive lower error bounds
for eigenvalue approximations by finite element methods with continuous or discontinuous
 piecewise polynomial functions under reasonable assumptions on regularities of eigenfunctions and triangulations.
 As an application, we show the asymptotic convergence behavior for the large numerical eigenvalue approximations.
This knowledge can help us to estimate the number of reliable numerical eigenvalues that convergence at a predetermined
rate, and thereby provide a solid theoretical foundation for the issue raised and discussed in \cite{Zhang}.

 Towards this end, we revisited piecewise polynomial approximations and obtain more delicate error estimates
 in terms of higher-order derivatives.
 Then combining { norm estimates of an eigenfunction in terms of the underlying
 eigenvalue leads to a lower bound of the approximation error for the eigenvalue problem in the following form
(see Sections 3 and 4):}
\begin{equation*}
\|u-u_h\|_{j,p,h} \geq  C \lambda^{\frac{r}{2}} h^{r-j},\ \ \ \ 0\leq j\leq r,
\end{equation*}
where $u_h$ is the finite element approximation (of polynomial degree less than $r$) to the Laplace eigenvalue problem,
and $C$ is a constant independent of the mesh size $h$ and the eigenvalue $\lambda$.

The outline of the rest of the paper goes as follows. Section 2 is
devoted to introducing the notation and a new norm equivalence theorem.
Then some upper bounds of the error by
piecewise polynomial approximation are given in Section 3.
Section 4 is for lower bounds of
the discretization error of the Laplace eigenvalue problem
by finite element methods. The lower bounds of the discretization error
 of the $2m$-th order elliptic eigenvalue problem are presented in Section 5.
 Some concluding remarks are given in the last section.

\section{A norm equivalent theorem: Revisit}
In this section, we first introduce some notations, and then state a new norm
equivalence theorem, from which we derive new upper bounds of the error by piecewise polynomial approximation.

Here we assume that $\Omega\subset \mathcal{R}^n$ ($n\geq 1$) is a bounded polytopic domain with Lipschitz
continuous boundary $\partial\Omega$.  Throughout this paper, we use the standard notation for the usual
Sobolev spaces and the corresponding norms, semi-norms, and inner products as in
 \cite{Adams,BrennerScott, Ciarlet}.

A multi-index $\alpha$ is an $n$-tuple of non-negative
integers $\alpha_i$ with length $\displaystyle |\alpha|=\sum_{i=1}^n\alpha_i$.
The derivative $D^{\alpha}v$ is then defined by
$$D^{\alpha}v=\Big(\frac{\partial}{\partial x_1}\Big)^{\alpha_1}\cdots \Big(\frac{\partial}{\partial x_n}\Big)^{\alpha_n}v.$$
For a subdomain $G$ of $\Omega$, the usual Sobolev spaces $W^{m,p}(G)$  with norm $\|\cdot\|_{m,p,G}$
and semi-norm $|\cdot|_{m,p,G}$ are used. In the case $p=2$, we have $H^m(G)=W^{m,2}(G)$ and the index
$p$ will be omitted. The $L^2$-inner product on $G$ is denoted by $(\cdot,\cdot)_G$. For $G\subset \Omega$
 we write $G\subset\subset \Omega$ to indicate that ${\rm dist}(\partial\Omega, G)>0$ and ${\rm meas}(G)>0$.

We introduce a face-to-face partition $\mathcal{T}_h$ of the computational domain $\Omega$ into
elements $K$ (triangles, rectangles, tetrahedrons, bricks, etc.) such that
$$\bar{\Omega}=\bigcup_{K\in\mathcal{T}_h}K$$
and let $\mathcal{E}_h$ denote  a set of all $(n-1)$-dimensional facets of all elements $K\in\mathcal{T}_h$.
Here $h:=\max_{K\in\mathcal{T}_h} h_K$ and $h_K=\text{diam}\ K$
denote the global and local mesh size, respectively \cite{BrennerScott,Ciarlet}.
We also define $\mathcal{T}_h^G=\big\{K\in \mathcal{T}_h\ {\rm and}\ K\subset G\big\}$ and
$h_G=\max_{K\in \mathcal{T}_h^G} h_K$.
A family of partitions $\mathcal{T}_h$ is said to be {\it shape regular} if it satisfies the following
condition \cite{BrennerScott}:
$$\exists \sigma>0 \makebox{ such that} \ \ {h_K}/{\tau_K}>\sigma \ \ \ \forall K\in \mathcal{T}_h, $$
where $\tau_K$ is maximum diameter of the inscribed ball in $K\in\mathcal{T}_h$. A { shape} regular family of
partitions $\mathcal{T}_h$ is called {\it quasi-uniform} if it satisfies \cite{BrennerScott,Ciarlet}
$$\exists \beta>0\ \ \makebox{
such that}\ \ \max\{{h}/{h_K}, \ K\in \mathcal{T}_h\}\leq \beta.$$
A finite element space $V_h$ of piecewise polynomial functions is constructed on $\mathcal{T}_h$ as
\begin{eqnarray}\label{FEM_Space_General}
V_h=\big\{v : v|_K\in\mathcal{P}_{\rm used},\ \ \forall K\in \mathcal{T}_h\big\},
\end{eqnarray}
where $\mathcal{P}_{\rm used}$ denotes the polynomial space associated with
multi-index set ${\rm Ind}_{\rm used}$
\begin{eqnarray*}
\mathcal{P}_{\rm used}=\Big\{v: v=\sum_{\alpha\in{\rm Ind}_{\rm used}}C_{\alpha}x^{\alpha}\Big\}.
\end{eqnarray*}
We define multi-index sets
$$
{\rm Ind}_r=\big\{\alpha:\; |\alpha|\leq r \big\},
\quad {\rm Ind}_{r, \rm rest}={\rm Ind}_{r}\backslash{\rm Ind}_{\rm used},
$$
and assume that ${\rm Ind}_{r-1}\subseteq {\rm Ind}_{\rm used}$ and
${\rm Ind}_r\nsubseteqq{\rm Ind}_{\rm used}$.
We further define the following broken semi-norm for $v\in W^{j,p}(G)\cup V_h$ with $G\subseteq \Omega$
\begin{eqnarray*}
|v|_{j,p,G,h}&:=&\left(\sum_{K\in\mathcal{T}_h^G}\int_{K}\sum_{|\alpha|=j}|D^{\alpha}v|^pdK\right)^{\frac{1}{p}},
\ \ \ \ 1\leq p<\infty,
\end{eqnarray*}
and
\begin{eqnarray*}
|v|_{j,\infty,G,h}&:=&\max_{K\in \mathcal{T}_h^G}|v|_{j,\infty,K}.
\end{eqnarray*}
Then the corresponding norm can be defined by
\begin{eqnarray*}
\|v\|_{j,p,G,h}&:=&\left(\sum_{i=0}^j|v|_{i,p,G,h}^p\right)^{\frac{1}{p}},
\end{eqnarray*}
and
\begin{eqnarray*}
\|v\|_{j,\infty,G,h}&:=&\max_{0\leq i\leq j}|v|_{i,\infty,G,h}.
\end{eqnarray*}
We will drop $G$ when $G=\Omega$. Throughout this paper, the symbol
$C$ or $c$ (with or without subscript) stands for a positive generic constant which may attain different
values at its different occurrences and which is independent of the mesh size $h$.

Now, we state a more general norm equivalence theorem{, of which the conventional} norm equivalence theorem
is a special case.
\begin{theorem}\label{Equivalence_Norm_Theorem}
Let $\Omega\subset \mathcal{R}^n$ be a Lipschitz domain{. For $j=1,2,\ldots,J$,
Assume that $f_j: W^{r+1,p}(\Omega)\rightarrow \mathcal{R}$ with $r\geq 1$ and $1\leq p <\infty$} are
semi norms on $W^{r+1,p}(\Omega)$ satisfying two conditions:

(H1)\ $0\leq f_j(v)\leq C\|v\|_{r+1,p,\Omega},\  \forall v\in W^{r,p}(\Omega)$, $1\leq j\leq J$.

(H2)\ If $v\in \mathcal{P}_{\rm used}$  and $f_j(v)=0$ for $1\leq j\leq J$, then $v=0$.\\
Then the quantity
\begin{eqnarray}\label{Equiv_Norm_1}
\|v\|=\sum_{\gamma\in {\rm Ind}_{r,\rm rest}}\|D^{\gamma}v\|_{0,p,\Omega} +\sum_{j=1}^Jf_j(v) +
\sum_{|\gamma|=r+1}\|D^{\gamma}v\|_{0,p,\Omega}
\end{eqnarray}
or
\begin{eqnarray}\label{Equiv_Norm_2}
\|v\|=\left(\sum_{\gamma\in {\rm Ind}_{r,\rm rest}}\|D^{\gamma}v\|_{0,p,\Omega}^p +\sum_{j=1}^Jf_j(v)^p
+\sum_{|\gamma|=r+1}\|D^{\gamma}v\|_{0,p,\Omega}^p\right)^{\frac{1}{p}}
\end{eqnarray}
defines a norm on $W^{r+1,p}(\Omega)$, which is equivalent to the norm $\|v\|_{r+1,p,\Omega}$.
\end{theorem}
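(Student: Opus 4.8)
The plan is to prove the two-sided estimate $c\|v\|_{r+1,p,\Omega}\le \|v\|\le C\|v\|_{r+1,p,\Omega}$, from which positive definiteness—and hence the fact that $\|\cdot\|$ is genuinely a norm, its homogeneity and triangle inequality being inherited termwise from the seminorms $f_j$ and the $L^p$ norms—follows at once. Since the two candidate quantities \eqref{Equiv_Norm_1} and \eqref{Equiv_Norm_2} are built from the same finitely many nonnegative terms, they are equivalent to each other (comparison of $\ell^1$ and $\ell^p$ norms on a finite-dimensional space), so it suffices to treat \eqref{Equiv_Norm_1}. The upper estimate $\|v\|\le C\|v\|_{r+1,p,\Omega}$ is the easy half: every term $\|D^\gamma v\|_{0,p,\Omega}$ with $|\gamma|\le r+1$ is dominated by $\|v\|_{r+1,p,\Omega}$ by definition, and each $f_j(v)$ is controlled by (H1), so summing gives the bound.

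For the nontrivial lower estimate I would argue by contradiction and compactness, following the classical scheme for Sobolev-type norm equivalences. Suppose no constant $c$ works; then there is a sequence $\{v_k\}\subset W^{r+1,p}(\Omega)$ with $\|v_k\|_{r+1,p,\Omega}=1$ and $\|v_k\|\to 0$. This sequence is bounded in $W^{r+1,p}(\Omega)$, so by the Rellich--Kondrachov theorem on the bounded Lipschitz domain $\Omega$ the embedding $W^{r+1,p}(\Omega)\hookrightarrow W^{r,p}(\Omega)$ is compact, and a subsequence (not relabeled) converges strongly in $W^{r,p}(\Omega)$ to some $v$. On the other hand $\|v_k\|\to 0$ forces $\|D^\gamma v_k\|_{0,p,\Omega}\to 0$ for every $|\gamma|=r+1$. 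Testing against $\phi\in C_c^\infty(\Omega)$ and passing to the limit shows that the weak derivatives $D^\gamma v$ vanish for $|\gamma|=r+1$, so $v\in W^{r+1,p}(\Omega)$; and since all top-order derivatives of $v_k$ tend to $0$ in $L^p$ while the lower-order ones converge, in fact $v_k\to v$ strongly in $W^{r+1,p}(\Omega)$. This route avoids any appeal to reflexivity and so remains valid down to $p=1$. In particular $\|v\|_{r+1,p,\Omega}=\lim_k\|v_k\|_{r+1,p,\Omega}=1$, so $v\ne 0$, and (assuming $\Omega$ connected) the vanishing of its $(r+1)$-th derivatives makes $v$ a polynomial of degree at most $r$.

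It then remains to derive a contradiction by showing $v=0$. The functional $\|\cdot\|$ is continuous on $W^{r+1,p}(\Omega)$: the $L^p$-norm terms obviously, and each $f_j$ because $|f_j(u)-f_j(w)|\le f_j(u-w)\le C\|u-w\|_{r+1,p,\Omega}$ by subadditivity and (H1). Hence $\|v\|=\lim_k\|v_k\|=0$, which unpacks into $\|D^\gamma v\|_{0,p,\Omega}=0$ for $\gamma\in\mathrm{Ind}_{r,\mathrm{rest}}$ and $f_j(v)=0$ for all $j$. Writing $v=\sum_{|\alpha|\le r}c_\alpha x^\alpha$, the identity $D^\gamma x^\beta=\gamma!\,\delta_{\beta\gamma}$ for $|\beta|=|\gamma|=r$ gives $D^\gamma v=\gamma!\,c_\gamma$ for $|\gamma|=r$, so $c_\gamma=0$ for every $\gamma\in\mathrm{Ind}_{r,\mathrm{rest}}$. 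Because $\mathrm{Ind}_{r-1}\subseteq\mathrm{Ind}_{\mathrm{used}}$, every surviving monomial of $v$ has its index in $\mathrm{Ind}_r\setminus\mathrm{Ind}_{r,\mathrm{rest}}\subseteq\mathrm{Ind}_{\mathrm{used}}$, whence $v\in\mathcal{P}_{\mathrm{used}}$. Now (H2) applies: $v\in\mathcal{P}_{\mathrm{used}}$ with $f_j(v)=0$ for all $j$ forces $v=0$, contradicting $v\ne0$ and completing the argument.

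The main obstacle is the compactness step together with the bookkeeping that lands the limit $v$ inside $\mathcal{P}_{\mathrm{used}}$ so that (H2) can be invoked. Concretely, one must combine the Rellich--Kondrachov convergence with the decay of the top-order seminorm to upgrade $W^{r,p}$-convergence to strong $W^{r+1,p}$-convergence (the point that is delicate when $p=1$, where weak compactness is unavailable), and then reconcile the conditions $D^\gamma v=0$ for $\gamma\in\mathrm{Ind}_{r,\mathrm{rest}}$ and for $|\gamma|=r+1$ with the precise structure of the index sets. The two standing assumptions serve exactly here: $\mathrm{Ind}_{r-1}\subseteq\mathrm{Ind}_{\mathrm{used}}$ guarantees that the surviving low-order monomials are automatically admissible, while $\mathrm{Ind}_r\nsubseteq\mathrm{Ind}_{\mathrm{used}}$ ensures $\mathrm{Ind}_{r,\mathrm{rest}}$ is nonempty but consists only of top-degree indices, so that the estimate is both meaningful and compatible with the polynomial structure of the limit.
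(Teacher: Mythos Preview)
Your proof is correct and follows essentially the same compactness-and-contradiction scheme as the paper: bound above by (H1), then suppose the lower bound fails, normalize, extract a $W^{r,p}$-convergent subsequence via Rellich--Kondrachov, upgrade to $W^{r+1,p}$-convergence using the vanishing of the top-order seminorms, identify the limit as an element of $\mathcal{P}_{\rm used}$, and apply (H2) to reach a contradiction. Your write-up is in fact a bit more careful than the paper's---you spell out why the limit lands in $\mathcal{P}_{\rm used}$ via the coefficient computation $D^\gamma v=\gamma!\,c_\gamma$ for $|\gamma|=r$, you flag the connectedness hypothesis, and you note explicitly why the argument survives at $p=1$---but the strategy is identical.
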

\begin{proof}
We prove that the quantity (\ref{Equiv_Norm_1}) defines a norm on $W^{r+1,p}(\Omega)$
equivalent to the norm $\|v\|_{r+1,p,\Omega}$. The statement on the quantity (\ref{Equiv_Norm_2})
can be proved similarly or by noting the equivalence between the two quantities (\ref{Equiv_Norm_1})
and (\ref{Equiv_Norm_2}).

By the condition { $(H1)$}, we know that for some constant $C>0$, the following estimate holds
\begin{eqnarray}\label{Inequality_1}
\|v\|\leq C\|v\|_{r+1,p,\Omega}\ \ \ \ \ \forall v\in W^{r+1,p}(\Omega).
\end{eqnarray}
So we only need to prove that there exist another constant $C>0$ such that
\begin{eqnarray}\label{Inequality_2}
\|v\|_{r+1,p,\Omega}\leq C\|v\|\ \ \ \ \ \forall v\in W^{r+1,p}(\Omega).
\end{eqnarray}
We argue by contradiction. Suppose the inequality is not true. Then we can find a sequence
$\{v_k\}\subset W^{r+1,p}(\Omega)$ with the properties
\begin{eqnarray}
\|v_k\|_{r+1,p,\Omega}&=&1,\label{Assum_1}\\
\|v_k\|&\leq&\frac{1}{k}\label{Assum_2}
\end{eqnarray}
for $k=1,2,\cdots$. From (\ref{Assum_2}), we can see that as $k\rightarrow \infty$,
\begin{eqnarray}\label{Condition_1}
\sum_{\gamma\in {\rm Ind}_{r,\rm rest}}\|D^{\gamma}v_k\|_{0,p,\Omega} \rightarrow 0,
\end{eqnarray}
\begin{eqnarray}\label{Condition_2}
\sum_{|\gamma|=r+1}\|D^{\gamma}v_k\|_{0,p,\Omega} \rightarrow 0
\end{eqnarray}
and
\begin{eqnarray}\label{Condition_3}
f_j(v_k)\rightarrow0,\ \ \ \ \ 1\leq j\leq J.
\end{eqnarray}
Since $\{v_k\}$ is a bounded sequence on $W^{r+1,p}(\Omega)$ from
the property (\ref{Assum_1}) and $W^{r+1,p}(\Omega)$ can be embedded compactly into
$W^{r,p}(\Omega)$, there is a subsequence of the sequence $\{v_k\}$, still denoted
as $\{v_k\}$, and a function $v\in W^{r,p}(\Omega)$ such that
\begin{eqnarray}
v_k\rightarrow v\ \ \ \ \ {\rm in}\ W^{r,p}(\Omega),\ \ \ {\rm as}\ k\rightarrow \infty.
\end{eqnarray}
This property and (\ref{Condition_1})-(\ref{Condition_2}), together with the uniqueness of a limit, imply that
\begin{eqnarray*}
v_k\rightarrow v\ \ \ {\rm in\ the \ norm}\ \|v\|_{r+1,p,\Omega},
\end{eqnarray*}
as ${ k}\rightarrow \infty$, and
\begin{eqnarray*}
&&\sum_{\gamma\in{\rm Ind}_{r,\rm rest}}\|D^{\gamma}v\|_{0,p,\Omega}+\sum_{|\gamma|=r+1}\|D^{\gamma}v\|_{0,p,\Omega}\nonumber\\
&=&\lim_{k\rightarrow \infty}\left(\sum_{\gamma\in{\rm Ind}_{r,\rm rest}}\|D^{\gamma}v_k\|_{0,p,\Omega}
+\sum_{|\gamma|=r+1}\|D^{\gamma}v_k\|_{0,p,\Omega}\right) =0.
\end{eqnarray*}
We then conclude that $v\in\mathcal{P}_{\rm used}(\Omega)$. On the other hand, from
 continuity of functionals $\{f_j\}_{1\leq j\leq J}$ and (\ref{Assum_2}), we find that
\begin{eqnarray*}
f_j(v)=\lim_{k\rightarrow \infty}f_j(v_k)=0.
\end{eqnarray*}
Using the condition { $(H2)$}, we see that $v=0$, which contradicts the relation that
\begin{eqnarray*}
\|v\|_{r+1,p,\Omega} =\lim_{k\rightarrow \infty}\|v_k\|_{r+1,p,\Omega}=1.
\end{eqnarray*}
Hence the inequality (\ref{Inequality_2}) holds which means the norm $\|\cdot\|$ is equivalent
to the one $\|\cdot\|_{r+1,p,\Omega}$ and the proof is completed.
\end{proof}

In the error analysis for the finite element method, we also need an inequality involving
the norm of the Sobolev quotient space
\begin{eqnarray}
V=W^{r+1,p}(\Omega)/\mathcal{P}_{r,\rm used} =\Big\{[v]: [v]=\{v+q| q\in\mathcal{P}_{r,\rm used}\}\ \ \forall
v\in W^{r+1,p}(\Omega)\Big\},
\end{eqnarray}
where $\mathcal{P}_{r,\rm used}$ denotes the polynomial space associated with
multi-index set ${\rm Ind}_{r,\rm used}={\rm Ind}_r\cap {\rm Ind}_{\rm used}$.

Any element $[v]$ of the space $V$ is an equivalence class, the difference between any two elements in
the equivalence class being a polynomial in the space $\mathcal{P}_{r,\rm used}$.
Any $[v]$ is called a representative element $[v]$. The quotient norm in the space $V$ is defined to be
\begin{eqnarray*}
\|[v]\|_V=\inf_{q\in\mathcal{P}_{r,\rm used}(\Omega)}\|v+q\|_{r+1,p,\Omega}.
\end{eqnarray*}
\begin{corollary}\label{Quotient_Norm_Corollary}
Let $\Omega\subset \mathcal{R}^n$ be a Lipschitz domain. Then
the quantity \\ $\sum_{\gamma\in{\rm Ind}_{r,\rm rest}}\|D^{\gamma}v\|_{0,p,\Omega}$ $+
\sum_{|\gamma|=r+1}\|D^{\gamma}v\|_{0,p,\Omega}$ {\rm ($1\leq p<\infty$)}
is a semi-norm on $V$, equivalent to the quotient norm $\|[v]\|_V$. It means
\begin{eqnarray}
&&C_1\left(\sum_{\gamma\in{\rm Ind}_{r,\rm rest}}\|D^{\gamma}v\|_{0,p,\Omega}+\sum_{|\gamma|=r+1}\|D^{\gamma}v\|_{0,p,\Omega}\right) \leq
\inf_{q\in\mathcal{P}_{r,\rm used}(\Omega)}\|v+q\|_{r+1,p,\Omega}\nonumber\\
 &&\ \ \ \ \ \ \ \ \ \ \leq C_2 \left(\sum_{\gamma\in{\rm Ind}_{r,\rm rest}}\|D^{\gamma}v\|_{0,p,\Omega}+\sum_{|\gamma|=r+1}\|D^{\gamma}v\|_{0,p,\Omega}\right),
\end{eqnarray}
where $C_1$ and $C_2$ are two constants independent of $v$.
\end{corollary}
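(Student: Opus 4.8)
The plan is to show first that the candidate quantity, which I abbreviate as
\[
N(v):=\sum_{\gamma\in{\rm Ind}_{r,\rm rest}}\|D^{\gamma}v\|_{0,p,\Omega}+\sum_{|\gamma|=r+1}\|D^{\gamma}v\|_{0,p,\Omega},
\]
descends to a genuine (semi-)norm on the quotient space $V=W^{r+1,p}(\Omega)/\mathcal{P}_{r,\rm used}$, and then to prove the two-sided estimate, obtaining the lower bound by a direct computation and the upper bound by a compactness argument in the spirit of the proof of Theorem \ref{Equivalence_Norm_Theorem}. To verify well-definedness I would check that $N(v+q)=N(v)$ for every $q\in\mathcal{P}_{r,\rm used}$. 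For $|\gamma|=r+1$ this is immediate, since $q$ has degree at most $r$ and so $D^{\gamma}q=0$. For $\gamma\in{\rm Ind}_{r,\rm rest}$ one invokes the standing hypothesis ${\rm Ind}_{r-1}\subseteq{\rm Ind}_{\rm used}$, which forces $|\gamma|=r$; writing $q=\sum_{\alpha\in{\rm Ind}_{r,\rm used}}c_{\alpha}x^{\alpha}$ and differentiating, the only term of $D^{\gamma}q$ that could survive is the one with $\alpha=\gamma$, but $\gamma\notin{\rm Ind}_{\rm used}$ means that coefficient is absent, so $D^{\gamma}q=0$. The same computation shows that $N(v)=0$ forces $v\in\mathcal{P}_{r,\rm used}$ (the top-order terms vanishing gives $v\in\mathcal{P}_r$, and the rest-terms vanishing kills the non-used degree-$r$ coefficients), so $N$ is in fact a norm on $V$.

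For the left-hand inequality (the $C_1$ bound) I would argue directly. For any $q\in\mathcal{P}_{r,\rm used}$ each term $\|D^{\gamma}(v+q)\|_{0,p,\Omega}$ with $|\gamma|\le r+1$ is bounded by $\|v+q\|_{r+1,p,\Omega}$; summing the finitely many terms and using the invariance $N(v)=N(v+q)$ just established gives $N(v)\le C\|v+q\|_{r+1,p,\Omega}$. Taking the infimum over $q\in\mathcal{P}_{r,\rm used}$ then yields $N(v)\le C\|[v]\|_{V}$, which is the left inequality with $C_{1}=1/C$.

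The right-hand inequality $\|[v]\|_{V}\le C_{2}N(v)$ is the substantive part, and it is where the main obstacle lies. I would argue by contradiction, mirroring the proof of Theorem \ref{Equivalence_Norm_Theorem}. If no such $C_{2}$ existed, there would be classes $[v_{k}]\in V$ with $\|[v_{k}]\|_{V}=1$ and $N(v_{k})\to0$; choosing representatives $v_{k}$ with $\|v_{k}\|_{r+1,p,\Omega}\le 2$ and using that $W^{r+1,p}(\Omega)$ is compactly embedded into $W^{r,p}(\Omega)$, a subsequence converges in $W^{r,p}(\Omega)$ to some $v$. Since the top-order terms in $N(v_{k})$ tend to zero, the distributional derivatives $D^{\gamma}v$ with $|\gamma|=r+1$ must vanish and the convergence upgrades to convergence in $W^{r+1,p}(\Omega)$; passing to the limit gives $N(v)=0$, hence $v\in\mathcal{P}_{r,\rm used}$ and $[v]=0$. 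Because the quotient norm is $1$-Lipschitz with respect to $\|\cdot\|_{r+1,p,\Omega}$, this forces $\|[v_{k}]\|_{V}\to\|[v]\|_{V}=0$, contradicting $\|[v_{k}]\|_{V}=1$.

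I expect the delicate points to be precisely the two limiting steps above: upgrading $W^{r,p}$-convergence to $W^{r+1,p}$-convergence, which relies on uniqueness of distributional limits together with the vanishing of the top-order terms of $N(v_k)$, and then identifying the equivalence class of the limit. It is worth emphasizing why a purely algebraic reduction to Theorem \ref{Equivalence_Norm_Theorem} does not work: one might hope to choose functionals $f_{j}$ that are "removable" by subtracting an element of $\mathcal{P}_{r,\rm used}$, but $\mathcal{P}_{\rm used}$ may contain monomials of degree larger than $r$ (for instance the term $x^{2}y^{2}z^{2}$ in $Q_{2}$ on a three-dimensional mesh), whose detecting functionals are neither controlled by $N$ nor removable by a degree-$r$ polynomial. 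Consequently the compact-embedding argument, rather than a direct invocation of the theorem, is the natural and essential route.
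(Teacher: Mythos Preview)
Your argument is correct: the well-definedness check, the direct lower bound, and the compactness-based contradiction for the upper bound all go through as you wrote them. However, your route differs from the paper's, and your final paragraph misidentifies an obstruction that is not actually there.

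The paper \emph{does} reduce the upper bound to Theorem~\ref{Equivalence_Norm_Theorem}. The point you missed is that the theorem is invoked with the polynomial space $\mathcal{P}_{r,\rm used}$ (not the full $\mathcal{P}_{\rm used}$) playing the role of the approximating space; since ${\rm Ind}_{r}\setminus{\rm Ind}_{r,\rm used}={\rm Ind}_{r,\rm rest}$, the ``rest'' terms in the theorem and in the corollary coincide. One then takes the functionals $f_{\gamma}(v)=\big|\int_{\Omega}D^{\gamma}v\,d\Omega\big|$ for $\gamma\in{\rm Ind}_{r,\rm used}$, checks~(H2) by the same top-down degree induction you sketched, and obtains
\[
\|v+q\|_{r+1,p,\Omega}\le C\Big(\sum_{\gamma\in{\rm Ind}_{r,\rm rest}}\|D^{\gamma}v\|_{0,p,\Omega}+\sum_{|\gamma|=r+1}\|D^{\gamma}v\|_{0,p,\Omega}+\sum_{\gamma\in{\rm Ind}_{r,\rm used}}\Big|\int_{\Omega}D^{\gamma}(v+q)\,d\Omega\Big|\Big).
\]
The paper then constructs an explicit $\bar{q}\in\mathcal{P}_{r,\rm used}$ annihilating all the integral functionals (solving for its coefficients from the top degree downward), and taking $q=\bar{q}$ gives the upper bound. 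Your worry about monomials such as $x^{2}y^{2}z^{2}\in\mathcal{P}_{\rm used}\setminus\mathcal{P}_{r}$ never materializes, because those monomials are not in $\mathcal{P}_{r,\rm used}$ and so play no role.

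Both routes ultimately rest on the same Rellich compactness. The paper's version is more constructive---it exhibits an explicit near-optimal representative $\bar{q}$, which is handy for the later scaling arguments---while yours is slightly cleaner in that it avoids introducing auxiliary functionals and re-checking~(H2). Either proof is acceptable; you should simply drop the claim that the algebraic reduction to Theorem~\ref{Equivalence_Norm_Theorem} is impossible.
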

\begin{proof}
Obviously, for any $[v]\in V$ and any $v\in [v]$, we have
\begin{eqnarray*}
\|[v]\|_V=\inf_{q\in\mathcal{P}_{r,\rm used}(\Omega)}\|v+q\|_{r+1,p,\Omega}\geq \sum_{\gamma\in{\rm Ind}_{r,\rm rest}}\|D^{\gamma}v\|_{0,p,\Omega}+\sum_{|\gamma|=r+1}\|D^{\gamma}v\|_{0,p,\Omega}.
\end{eqnarray*}
Thus we only need to prove that there is a constant $C$, depending only on $\Omega$, such that
\begin{eqnarray}\label{Quotient_Norm_Inequality}
\inf_{q\in\mathcal{P}_{r,\rm used}(\Omega)}\|v+q\|_{r+1,p,\Omega}\leq
C\left(\sum_{\gamma\in{\rm Ind}_{r,\rm rest}}\|D^{\gamma}v\|_{0,p,\Omega}+\sum_{|\gamma|=r+1}\|D^{\gamma}v\|_{0,p,\Omega}\right).
\end{eqnarray}
Denote $N = {\rm dim}(\mathcal{P}_{r,\rm used}(\Omega))$.
Define $N$ independent linear continuous functionals
on $\mathcal{P}_{r,\rm used}$,  the continuity being with respect to the norm of $W^{r+1,p}(\Omega)$.
Now we can use Theorem \ref{Equivalence_Norm_Theorem} to obtain the following inequality
\begin{eqnarray}
\|v\|_{r+1,p,\Omega}&\leq& C\left(\sum_{\gamma\in{\rm Ind}_{r,\rm rest}}\|D^{\gamma}v\|_{0,p,\Omega}
+\sum_{|\gamma|=r+1}\|D^{\gamma}v\|_{0,p,\Omega}\right.\nonumber\\
&&\ \ \ \ \ \ \ \
\left.+\sum_{\gamma\in{\rm Ind}_{r,\rm used}}\Big|\int_{\Omega}D^{\gamma}vd\Omega\Big|\right), \ \ \forall v\in W^{r+1,p}(\Omega).
\end{eqnarray}
Replacing $v$ by $v+q$ and noting that $D^{\gamma}q=0$ for
$\gamma\in{\rm Ind}_{r,\rm rest}\cup {\rm Ind}_{r+1}$, we have
\begin{eqnarray}\label{Quotient_1}
\|v+q\|_{r+1,p,\Omega}&\leq&
C\left(\sum_{\gamma\in{\rm Ind}_{r,\rm rest}}\|D^{\gamma}v\|_{0,p,\Omega}
+\sum_{|\gamma|=r+1}\|D^{\gamma}v\|_{0,p,\Omega}\right.\nonumber\\
&&\hskip-3cm\left.+\sum_{\gamma\in{\rm Ind}_{r,\rm used}}
\Big|\int_{\Omega}D^{\gamma}(v+q)d\Omega\Big|\right),
\ \forall v\in W^{r+1,p}(\Omega)\ {\rm and}\ \forall q\in \mathcal{P}_{r,\rm used}.
\end{eqnarray}
Now we construct a polynomial $\bar{q}\in \mathcal{P}_{r,\rm used}$ satisfying
\begin{eqnarray}\label{Quotient_2}
\int_{\Omega}D^{\gamma}(v+\bar{q})d\Omega=0,\ \ \ \ {\rm for}\ \gamma\in {\rm Ind}_{r,\rm used}.
\end{eqnarray}
This can always be done: set $\gamma\in {\rm Ind}_{r,\rm used}$, then $D^{\gamma}\bar{q}$ equals
$\gamma!=\gamma_1!\cdots\gamma_n!$ times the coefficient of $x^{\gamma}:=x_1^{\gamma_1}\cdots x_n^{\gamma_n}$,
so the coefficient can be computed by
using (\ref{Quotient_2}). Having found all the coefficients for terms in $\mathcal{P}_{r,\rm used}$,
we set $\gamma\in{\rm Ind}_{r,\rm used}$, and use (\ref{Quotient_2}) to compute all the coefficients for terms of
$\mathcal{P}_{r,\rm used}$. Proceeding in this way, we obtain the polynomial $\bar{q}$ satisfying
the condition (\ref{Quotient_2}) for the given function $v$.

With $q =\bar{q}$ in (\ref{Quotient_1}), we have
\begin{eqnarray*}
\inf_{q\in\mathcal{P}_{r,\rm used}(\Omega)}\|v+q\|_{r+1,p,\Omega}&\leq& \|v+\bar{q}\|_{r+1,p,\Omega}\nonumber\\
&\leq& C\left(\sum_{\gamma\in{\rm Ind}_{r,\rm rest}}\|D^{\gamma}v\|_{0,p,\Omega}
+\sum_{|\gamma|=r+1}\|D^{\gamma}v\|_{0,p,\Omega}\right).
\end{eqnarray*}
This is the desired result (\ref{Quotient_Norm_Inequality}) and the proof is complete.
\end{proof}

Corollary \ref{Quotient_Norm_Corollary} is more delicate than the { standard} norm equivalence theorem.
In some special meshes, this new estimate can lead to new and sharper error estimates for the
piecewise polynomial approximation.

\section{New upper error bounds for piecewise polynomial approximation}
In this section, as applications of the new  norm equivalence theorem stated in Theorem \ref{Equivalence_Norm_Theorem},
we give some error estimates for the tensor product and intermediate finite element spaces.
\begin{theorem}\label{Upper_Bound_Theorem}
Assume $u\in W^{r+1,p}(\Omega)$ and the mesh is tensor-product type with the mesh size
$h_{K,1},\cdots,h_{K,n}$ in each direction and the edges paralleling to the axes
for each $K\in\mathcal{T}_h$. We use the finite element space $V_h$
defined in (\ref{FEM_Space_General}) to approximate the function $u$.
Then the following upper bound of the error holds
\begin{eqnarray}\label{Upper_Bound_Error}
\inf_{v_h\in V_h}\|D^{\alpha}(u-v_h)\|_{0,p,\Omega,h} &\leq& C
\left( \sum_{K\in\mathcal{T}_h}\sum_{\gamma\in {\rm Ind}_{r,\rm rest}}
h_K^{p(\gamma-\alpha)} \|D^{\gamma}u\|_{0,p,K}^p\right.\nonumber\\
&&\ \ \ \ \ \left.+\sum_{K\in\mathcal{T}_h}\sum_{|\gamma|=r+1}
h_K^{p(\gamma-\alpha)} \|D^{\gamma}u\|_{0,p,K}^p \right)^{\frac{1}{p}},
\end{eqnarray}
where $h_K^{p(\gamma-\alpha)}=h_{K,1}^{p(\gamma_1-\alpha_1)}\cdots h_{K,n}^{p(\gamma_n-\alpha_n)}$.
\end{theorem}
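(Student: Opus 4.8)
The plan is to reduce the global, anisotropic estimate to a local estimate on a single reference element by means of a scaling (affine) argument, and then on the reference element invoke the quotient–norm equivalence of Corollary 2.1. First I would fix an element $K\in\mathcal{T}_h$ and introduce the anisotropic affine map $F_K:\widehat{K}\to K$ that takes the reference cube $\widehat{K}=[0,1]^n$ onto $K$, with $F_K(\widehat{x})=B_K\widehat{x}+b_K$ and $B_K=\mathrm{diag}(h_{K,1},\ldots,h_{K,n})$. Because the mesh is of tensor-product type with edges parallel to the axes, $B_K$ is diagonal, which is exactly what makes the sharp anisotropic weights $h_K^{p(\gamma-\alpha)}$ appear rather than a single isotropic power of $h_K$. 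Setting $\widehat{u}=u\circ F_K$, the chain rule gives $D^{\alpha}(u\circ F_K)=\prod_i h_{K,i}^{\alpha_i}\,(D^{\alpha}u)\circ F_K$, so each derivative picks up precisely the monomial factor in the $h_{K,i}$ corresponding to its multi-index; tracking these factors through the change of variables (and the Jacobian $\det B_K=\prod_i h_{K,i}$) is the bookkeeping heart of the argument.

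Next, on the reference element $\widehat K$ I would apply Corollary 2.1 to the function $\widehat u$. The corollary states that
\begin{eqnarray*}
\inf_{q\in\mathcal{P}_{r,\rm used}}\|\widehat u+q\|_{r+1,p,\widehat K}
\;\leq\; C\Big(\sum_{\gamma\in{\rm Ind}_{r,\rm rest}}\|D^{\gamma}\widehat u\|_{0,p,\widehat K}
+\sum_{|\gamma|=r+1}\|D^{\gamma}\widehat u\|_{0,p,\widehat K}\Big),
\end{eqnarray*}
with $C$ depending only on $\widehat K$ (hence independent of $K$ and $h$). Choosing $v_h|_K$ to be the pullback $q\circ F_K^{-1}$ of the near-minimizer $q\in\mathcal{P}_{r,\rm used}$, I would bound $\|D^{\alpha}(u-v_h)\|_{0,p,K}$ from above by the left-hand side after scaling back to $K$. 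The scaling introduces the factors $h_{K,i}^{\alpha_i}$ on the left (from differentiating $\alpha$ times) against the factors $h_{K,i}^{\gamma_i}$ on the right (from each $D^{\gamma}\widehat u$), together with a common Jacobian factor that cancels between the two sides; what survives is exactly the anisotropic weight $h_K^{p(\gamma-\alpha)}$ inside the $p$-th power.

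I would then raise the resulting local inequality to the $p$-th power, sum over all $K\in\mathcal{T}_h$, and take the $p$-th root; since the broken norm $\|\cdot\|_{0,p,\Omega,h}$ is by definition the $\ell^p$-aggregation of the elementwise $L^p$-norms, and the infimum over $V_h$ of a sum is bounded by the sum of elementwise infima, this produces the claimed global bound (2.11). The main obstacle I anticipate is the careful anisotropic scaling in the second step: one must verify that the \emph{same} diagonal scaling matrix $B_K$ governs both the derivative factors from $D^{\alpha}$ on the left and from $D^{\gamma}$ on the right, so that the two sets of powers combine into the net exponent $\gamma-\alpha$ in each coordinate, and that the constant $C$ inherited from Corollary 2.1 is genuinely mesh-independent. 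Shape regularity is what guarantees this last point for a fixed reference configuration; the tensor-product, axis-parallel hypothesis is what keeps $B_K$ diagonal so the per-coordinate powers do not mix.
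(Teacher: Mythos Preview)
Your proposal is correct and follows essentially the same route as the paper's own proof, which is a one-line sketch invoking Corollary~\ref{Quotient_Norm_Corollary} together with the standard scaling argument. You have simply spelled out that scaling argument in detail, correctly identifying that the axis-parallel, tensor-product hypothesis makes $B_K$ diagonal so that each $D^{\gamma}$ scales by the monomial $\prod_i h_{K,i}^{\gamma_i}$, yielding the anisotropic weight $h_K^{p(\gamma-\alpha)}$. One small point the paper mentions explicitly that you should add: the inclusion ${\rm Ind}_{r,\rm used}\subseteq{\rm Ind}_{\rm used}$ (equivalently $\mathcal{P}_{r,\rm used}\subseteq\mathcal{P}_{\rm used}$) is what guarantees that the pullback $q\circ F_K^{-1}$ of your near-minimizer actually lies in $V_h$; without it the elementwise choice of $v_h$ would not be admissible.
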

\begin{proof}
The proof can be given by combing Corollary \ref{Quotient_Norm_Corollary},
${\rm Ind}_{r,\rm used}\subseteq {\rm Ind}_{\rm used}$  and the standard
scaling argument (cf. \cite{BrennerScott,Ciarlet}).
\end{proof}

Theorem \ref{Upper_Bound_Theorem} can be used to derive { sharper (than conventional) error estimates
for tensor product and intermediate
finite element spaces} \cite{BabuskaStroulis,BrennerScott,Ciarlet}.
 The tensor product finite element space on tensor product meshes is
 defined as follows \cite{BabuskaStroulis,BrennerScott,Ciarlet}:
\begin{eqnarray}\label{Tensor_Product}
V_h&=&\Big\{v_h:\ v_h|_K\in\mathcal{Q}_{r-1},\ \ \forall K\in\mathcal{T}_h\Big\}.
\end{eqnarray}
Here and hereafter $\mathcal{Q}_{k}$ denotes the space of polynomials with a degree no more than $k$ for each
variable, i.e. $\mathcal{Q}_k=\{x^{\alpha}$ with $\alpha_i \leq k$ for $i=1,\cdots,n\}$.
For example, when $V_h$ is the biquadratic element with $r=3$ and $n=2$, we know
${\rm Ind}_{3,{\rm rest}}=\{(3,0), (0,3)\}$ and we have the following error estimates
\begin{eqnarray}
\inf_{v_h\in V_h}\|u-v_h\|_0 &\leq& C\left(\sum_{K\in\mathcal{T}_h}h_{K,1}^{6}\|\partial_{xxx}u\|_0^2+
h_{K,2}^{6}\|\partial_{yyy}u\|_0^2\right)^{\frac{1}{2}}\nonumber\\
&&\ \ \ \ +C\left(\sum_{K\in\mathcal{T}_h}\sum_{|\gamma|=4}h_K^{2\gamma}
\|D^{\gamma}u\|_{0,2}^2\right)^{\frac{1}{2}},\\
\inf_{v_h\in V_h}\|\partial_{x}(u-v_h)\|_0 &\leq& C\left(\sum_{K\in\mathcal{T}_h}h_{K,1}^2\|\partial_{xxx}u\|_0^{4}+
h_{K,1}^{-2}h_{K,2}^{6}\|\partial_{yyy}u\|_0^2\right)^{\frac{1}{2}}\nonumber\\
&&\ \ \ \ +C\left(\sum_{K\in\mathcal{T}_h}\sum_{|\gamma|=4}h_K^{2(\gamma-(1,0))}\|D^{\gamma}u\|_{0,2}^2 \right)^{\frac{1}{2}},\\
\inf_{v_h\in V_h}\|\partial_{y}(u-v_h)\|_0 &\leq& C\left(\sum_{K\in\mathcal{T}_h}h_{K,1}^{6}h_{K,2}^{-2}\|\partial_{xxx}u\|_0^2+
h_{K,2}^{4}\|\partial_{yyy}u\|_0^2\right)^{\frac{1}{2}}\nonumber\\
&&\ \ \ \ +C\left(\sum_{K\in\mathcal{T}_h}\sum_{|\gamma|=4}
h_K^{2(\gamma-(0,1))}\|D^{\gamma}u\|_{0,2}^2\right)^{\frac{1}{2}},
\end{eqnarray}
which are different from the {traditional} error estimates.

The intermediate family of the second type on rectangular meshes
is defined as follows \cite{BabuskaStroulis,ZhangYanSun}:
\begin{eqnarray}\label{Intermediate_Type}
V_h&=&\Big\{v_h:\ v_h|_K\in\mathcal{P}_{r+1}\cap\mathcal{Q}_{r-1},\ \forall K\in\mathcal{T}_h\Big\},
\end{eqnarray}
where $\mathcal{P}_k$ denotes the space of polynomials with degree no more than $k$,
i.e. $\mathcal{P}_k=\{x^{\alpha}$ with $|\alpha|\leq k\}$. For example, when $r=4$, we have
${\rm Ind}_{4,{\rm rest}}=\{(4,0),(0,4)\}$ and the following error estimates
\begin{eqnarray}
\inf_{v_h\in V_h}\|u-v_h\|_0 &\leq& C\left(\sum_{K\in\mathcal{T}_h}h_{K,1}^{8}\|\partial_{xxxx}u\|_0^2+
h_{K,2}^{8}\|\partial_{yyyy}u\|_0^2\right)^{\frac{1}{2}}\nonumber\\
&&\ \ \ \ +C\left(\sum_{K\in\mathcal{T}_h}\sum_{|\gamma|=5}
h_K^{2\gamma}\|D^{\gamma}u\|_{0,2}^2\right)^{\frac{1}{2}},\\
\inf_{v_h\in V_h}\|\partial_{x}(u-v_h)\|_0 &\leq& C\left(\sum_{K\in\mathcal{T}_h}h_{K,1}^{6}\|\partial_{xxxx}u\|_0^2+
h_{K,1}^{-2}h_{K,2}^{8}\|\partial_{yyyy}u\|_0^2\right)^{\frac{1}{2}}\nonumber\\
&&\ \ \ \ +C\left(\sum_{K\in\mathcal{T}_h}\sum_{|\gamma|=5}
h_K^{2(\gamma-(1,0))}\|D^{\gamma}u\|_{0,2}^2\right)^{\frac{1}{2}},\\
\inf_{v_h\in V_h}\|\partial_{y}(u-v_h)\|_0 &\leq& C\left(\sum_{K\in\mathcal{T}_h}h_{K,1}^{8}h_{K,2}^{-2}\|\partial_{xxxx}u\|_0^2+
h_{K,2}^{6}\|\partial_{yyyy}u\|_0^2\right)^{\frac{1}{2}}\nonumber\\
&&\ \ \ \ +C\left(\sum_{K\in\mathcal{T}_h}\sum_{|\gamma|=5}
h_K^{2(\gamma-(0,1))}\|D^{\gamma}u\|_{0,2}^2\right)^{\frac{1}{2}}.
\end{eqnarray}

Theorem \ref{Upper_Bound_Theorem} can also be used to give a new upper bound error estimate for the
serendipity {finite element families} \cite{ArnoldAwanou,BabuskaStroulis,BrennerScott}:
\begin{eqnarray}\label{Serendipity_Type}
V_h&=&\Big\{v_h:\ v_h|_K\in\mathcal{S}_{r},\ \forall K\in\mathcal{T}_h\Big\},
\end{eqnarray}
where $\mathcal{S}_r=\mathcal{P}_{r-1}+{\rm span}\{x^{r-1}y,xy^{r-1}\}$.
For example, when $r=3$, we know
${\rm Ind}_{3,{\rm rest}}=\{(3,0), (0,3)\}$ and the following error estimates hold
\begin{eqnarray}
\inf_{v_h\in V_h}\|u-v_h\|_0 &\leq& C\left(\sum_{K\in\mathcal{T}_h}h_{K,1}^{6}\|\partial_{xxx}u\|_0^2+
h_{K,2}^{6}\|\partial_{yyy}u\|_0^2\right)^{\frac{1}{2}}\nonumber\\
&&\ \ \ \ +C\left(\sum_{K\in\mathcal{T}_h}\sum_{|\gamma|=4}
h_K^{2\gamma}\|D^{\gamma}u\|_{0,2}^2\right)^{\frac{1}{2}},\\
\inf_{v_h\in V_h}\|\partial_{x}(u-v_h)\|_0 &\leq& C\left(\sum_{K\in\mathcal{T}_h}h_{K,1}^{4}\|\partial_{xxx}u\|_0^2+
h_{K,1}^{-2}h_{K,2}^{6}\|\partial_{yyy}u\|_0^2\right)^{\frac{1}{2}}\nonumber\\
&&\ \ \ \ +C\left(\sum_{K\in\mathcal{T}_h}\sum_{|\gamma|=4}
h_K^{2(\gamma-(1,0))}\|D^{\gamma}u\|_{0,2}^2\right)^{\frac{1}{2}},\\
\inf_{v_h\in V_h}\|\partial_{y}(u-v_h)\|_0 &\leq& C\left(\sum_{K\in\mathcal{T}_h}h_{K,1}^{6}h_{K,2}^{-2}\|\partial_{xxx}u\|_0^2+
h_{K,2}^{4}\|\partial_{yyy}u\|_0^2\right)^{\frac{1}{2}}\nonumber\\
&&\ \ \ \ +C\left(\sum_{K\in\mathcal{T}_h}\sum_{|\gamma|=4}
h_K^{2(\gamma-(0,1))}\|D^{\gamma}u\|_{0,2}^2\right)^{\frac{1}{2}},
\end{eqnarray}
which are different from the {standard} error estimates.

Compared with the {traditional} upper bound (\ref{Upper_Bound_Normal}), the above
new bounds are sharper  in some sense. Theorem \ref{Upper_Bound_Theorem}
 can also been used to deduce some more delicate and sharper error estimates for other finite element families.
To end this section, we would like to point out that this type of error estimates (with some difference)
have also been given and proved  in \cite[Chapter 4.5]{BrennerScott} {by} the average Taylor polynomials technique.
Nevertheless, we need more delicate theory in our analysis and the approach based on the quotient space argument
 in this paper serves the purpose and requires minimum regularity assumption.

\section{Lower bounds for finite element approximation of Laplace eigenvalue problem}
\label{Lower_Bound_Elliptic_Eigenvalue_Section}
Another important issue in this paper is to derive the lower
bounds of the discretization error for the Laplace eigenvalue
problem by the finite element method.
The special feature here is that the discretization error will be expressed
in the power function of the corresponding eigenvalue. Combined with the upper bounds of the
discretization error \cite{Babuska2,StrangFix}, we are able to estimate the number of reliable
numerical eigenvalues at the predetermined convergence rate.
For more information and examples, please refer to \cite{Zhang}.
Here we provide the corresponding theoretical analysis.

Here we are concerned with the following Laplace eigenvalue problem:

Find $(\lambda, u)$ such that $\|u\|_0=1$ and
\begin{equation}\label{Laplace_Eigenvalue}
\left\{
\begin{array}{rcl}
-\Delta u&=&\lambda u\ \ \ {\rm in}\ \Omega,\\
u&=&0\ \ \ \ \ {\rm on}\ \partial\Omega.
\end{array}
\right.
\end{equation}
Based on the partition $\mathcal{T}_h$ on $\bar{\Omega}$, we can define a suitable
finite element space $V_h$ (conforming or nonconforming for the second order elliptic problem)
as in (\ref{FEM_Space_General}) with piecewise polynomials of degree less than $r$.
We solve the eigenvalue problem (\ref{Laplace_Eigenvalue})
by the finite element method. The finite element approximation
$(\lambda_h,u_h)\in\mathcal{R}\times V_h$  of
(\ref{Laplace_Eigenvalue}) can be defined as follows:

Find $(\lambda_h,u_h)\in \mathcal{R}\times V_h$ such that $\|u_h\|_0=1$ and
\begin{eqnarray}\label{Poisson_Eigenvalue_FEM}
a_h(u_h,v_h)&=&\lambda_h(u_h,v_h)\ \ \ \forall v_h\in V_h.
\end{eqnarray}
First, it is well known that the following upper bound of the
discretization error (see \cite{BrennerScott,Ciarlet, StrangFix}) holds
\begin{eqnarray}\label{upper_bound}
\|u-u_h\|_{\ell,p,h} &\leq& Ch^{s-\ell}\lambda^{(s-\ell)/2},\ \ \ \ 0\leq\ell\leq 1,\ \ 0<s\leq r,
\end{eqnarray}
where  $ 1<p<\infty$ and the constant $C$ is independent of the mesh size $h$ and the
eigenvalue $\lambda$.
In the rest of this section, we will prove $Ch^{r-\ell}\lambda^{(r-\ell)/2}$ is also a
lower bound of discretization error $\|u-u_h\|_{\ell,p,h}$ under some suitable conditions.
\begin{lemma}\label{Lambda_Power_Lemma}
For the eigenvalue problem (\ref{Laplace_Eigenvalue}), the following property holds
\begin{eqnarray}\label{Estimate_H_k}
\|\nabla^{\lceil\frac{r}{2}\rceil-\lfloor\frac{r}{2}\rfloor}\Delta^{\lfloor\frac{r}{2}\rfloor}u\|_{0,p,G}
 = \lambda^{\lfloor\frac{r}{2}\rfloor}\|\nabla^{\lceil\frac{r}{2}\rceil-\lfloor\frac{r}{2}\rfloor}u\|_{0,p,G},
\end{eqnarray}
where $G\subset\subset \Omega$,
$\lceil r\rceil$ denotes the smallest integer not less than $r$ and
$\lfloor r\rfloor$ is the biggest integer not larger than $r$.
\end{lemma}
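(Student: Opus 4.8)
The plan is to reduce the identity to a pointwise algebraic consequence of the eigenvalue equation, the only genuine analytic input being interior regularity. Write $m=\lfloor r/2\rfloor$ and $k=\lceil r/2\rceil-\lfloor r/2\rfloor$, and observe that $k\in\{0,1\}$: it equals $0$ when $r$ is even and $1$ when $r$ is odd. Thus the assertion (\ref{Estimate_H_k}) reads
\begin{eqnarray*}
\|\nabla^{k}\Delta^{m}u\|_{0,p,G}=\lambda^{m}\|\nabla^{k}u\|_{0,p,G},\qquad k\in\{0,1\},
\end{eqnarray*}
and its entire content is that applying $\Delta$ to $u$ a total of $m$ times multiplies it by $(-\lambda)^{m}$.

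First I would justify that every quantity appearing above is well defined on $G\subset\subset\Omega$. Since $-\Delta u=\lambda u$ with $\lambda u\in L^{2}(\Omega)$, interior elliptic regularity gives $u\in H^{2}_{\rm loc}(\Omega)$; bootstrapping the equation (each application raises the local Sobolev index by two) yields $u\in C^{\infty}$ in the interior of $\Omega$. Consequently $\Delta^{m}u$ and $\nabla^{k}\Delta^{m}u$ are smooth on any set $G$ with ${\rm dist}(\partial\Omega,G)>0$, and the $L^{p}(G)$ norms in (\ref{Estimate_H_k}) are finite. This is exactly why the hypothesis $G\subset\subset\Omega$ is imposed: it keeps us away from $\partial\Omega$, where the regularity needed to iterate $\Delta$ need not hold. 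I expect this regularity step to be the only nontrivial part; the remainder is algebra and homogeneity.

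Next I would iterate the equation (\ref{Laplace_Eigenvalue}). From $\Delta u=-\lambda u$ and the smoothness just established, an induction on $j$ gives $\Delta^{j}u=(-\lambda)^{j}u=(-1)^{j}\lambda^{j}u$ for $0\le j\le m$, since at each step one applies $\Delta$ to both sides and reuses the equation, a legitimate operation because $u$ is $C^{\infty}$ in the interior. In particular $\Delta^{m}u=(-1)^{m}\lambda^{m}u$. Applying the operator $\nabla^{k}$ and invoking its linearity yields the pointwise identity $\nabla^{k}\Delta^{m}u=(-1)^{m}\lambda^{m}\,\nabla^{k}u$ on $G$. Finally I would take $L^{p}(G)$ norms of both sides: the constant $(-1)^{m}\lambda^{m}$ pulls out in absolute value, and because the Dirichlet eigenvalue satisfies $\lambda>0$ we have $|(-1)^{m}\lambda^{m}|=\lambda^{m}$, which gives (\ref{Estimate_H_k}) for every $1\le p<\infty$.
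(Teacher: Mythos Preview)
Your argument is correct and follows essentially the same route as the paper: derive the pointwise identity $\nabla^{k}\Delta^{m}u=\pm\lambda^{m}\nabla^{k}u$ by iterating the eigenvalue equation, then take the $L^{p}(G)$ norm. You are in fact more careful than the paper's three-line proof, which omits both the interior-regularity justification and the sign $(-1)^{m}$ (immaterial after taking norms, but your treatment is cleaner).
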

\begin{proof}
From the eigenvalue problem (\ref{Laplace_Eigenvalue}), we have the following equation
\begin{eqnarray*}
\nabla^{\lceil\frac{r}{2}\rceil-\lfloor\frac{r}{2}\rfloor}\Delta^{\lfloor\frac{r}{2}\rfloor}u
= \lambda^{\lfloor\frac{r}{2}\rfloor}\nabla^{\lceil\frac{r}{2}\rceil-\lfloor\frac{r}{2}\rfloor}u{,}
\end{eqnarray*}
which leads to the desired result (\ref{Estimate_H_k}) and the proof is complete.
\end{proof}

From Lemma \ref{Lambda_Power_Lemma}, we state the following
lower bound results of the discretization error.
\begin{theorem}\label{Lower_bound_Eigenvalue_Corollary}
 Assume that
 $\nabla^{\lceil\frac{r}{2}\rceil-\lfloor\frac{r}{2}\rfloor}\Delta^{\lfloor\frac{r}{2}\rfloor}v_h=0$
 for all $K\in\mathcal{T}_h${, $v_h\in V_h$, where the partition $\mathcal{T}_h$
  is quasi-uniform and shape regular.
 If} there is a domain $G\subset\subset \Omega$ such that
 $\|u\|_{0,p,G}>0$, then for any given exact eigenpair $(\lambda,u)$,
  its corresponding eigenpair approximation
$(\lambda_h, u_h)\in \mathcal{R}\times V_h$ in  (\ref{Poisson_Eigenvalue_FEM}) satisfies the following
lower bound of the discretization error
\begin{equation}\label{lower_convergence_2_1}
\|u-u_h\|_{j,p,h}\geq Ch^{r-j}\lambda^{\frac{r}{2}},\ \ \ \ 0\leq j\leq r,
\end{equation}
where the mesh size $h$ is small enough,
$2\leq p\leq \infty$ and $C$ is a positive constant independent of
 $\lambda$ and the mesh size $h$.

Furthermore, if the family $\{\mathcal{T}_h\}$ of partitions  is only shape regular,
then for any given exact eigenpair $(\lambda,u)$, its corresponding approximation $(\lambda_h, u_h)$
has the following lower bounds of the discretization error
\begin{equation}\label{lower_convergence_2_2}
\left(\sum_{K\in\mathcal{T}_h^G}h_K^{p(j-r)}
\big\|u-u_h\big\|_{j,p,K}^p\right)^{\frac{1}{p}} \geq C\lambda^{\frac{r}{2}},\ \ \ \ 0\leq j\leq r
\end{equation}
and
\begin{equation}\label{lower_convergence_2_3}
\left(\sum_{K\in\mathcal{T}_h^G}h_K^{p\big((j-r)+n(\frac{1}{p}-\frac{1}{q})\big)}
\big\|u-u_h\big\|_{j,q,K}^p\right)^{\frac{1}{p}} \geq C\lambda^{\frac{r}{2}},\ \ \ \ 0\leq j\leq r,
\end{equation}
where the mesh size $h$ is also small enough, $2\leq p <\infty$, $p\leq q\leq \infty$,  $C$ are positive
constants independent of $\lambda$ and the mesh size $h$.
\end{theorem}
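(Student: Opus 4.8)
The plan is to recognize that the asserted inequality is really a statement about \emph{best approximation}: since $u_h\in V_h$ we trivially have $\|u-u_h\|_{j,p,h}\ge\inf_{v_h\in V_h}\|u-v_h\|_{j,p,h}$, so the Galerkin relation (\ref{Poisson_Eigenvalue_FEM}) is never used and it suffices to bound the best approximation error from below. Writing $L:=\nabla^{\lceil r/2\rceil-\lfloor r/2\rfloor}\Delta^{\lfloor r/2\rfloor}$, the hypothesis says $Lv_h=0$ on every $K$ for all $v_h\in V_h$, so $L(u-v_h)=Lu$ on $K$. Because $L$ is a fixed combination of derivatives of order exactly $r$, we get $\|Lu\|_{0,p,K}=\|L(u-v_h)\|_{0,p,K}\le C|u-v_h|_{r,p,K}$, which is already the case $j=r$. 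The whole theorem will follow once I prove the element-wise lower bound $|u-v_h|_{j,p,K}\ge c\,h_K^{r-j}\|Lu\|_{0,p,K}$ for every $v_h\in V_h$ and $0\le j\le r$.

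First I would scale each $K$ to a reference element $\hat K$ of unit size. The pulled-back operator $\hat L$ is then a constant-coefficient operator of order $r$, with coefficients ranging over a compact set by shape regularity, and $\hat L\hat v=0$ for all $\hat v$ in the reference space $\hat V$. Comparing scaling exponents gives $|u-v_h|_{j,p,K}\simeq h_K^{n/p-j}|\hat u-\hat v|_{j,p,\hat K}$ and $\|Lu\|_{0,p,K}\simeq h_K^{n/p-r}\|\hat L\hat u\|_{0,p,\hat K}$ (up to shape-regularity constants), so the factors $h_K^{r-j}$ cancel and the target reduces to the $h$-free reference inequality $\inf_{\hat v\in\hat V}|\hat u-\hat v|_{j,p,\hat K}\ge c\,\|\hat L\hat u\|_{0,p,\hat K}$.

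The hard part is this reference inequality, which cannot hold for arbitrary $\hat u$ (a highly oscillatory $\hat u$ lets the order-$r$ quantity $\|\hat L\hat u\|$ dominate the order-$j$ seminorm), so I must exploit that $\hat u$ is the pullback of an eigenfunction. I would split $\hat u=\hat P+\hat R$ with $\hat P$ its degree-$r$ Taylor polynomial and control the remainder by Bramble--Hilbert, $|\hat R|_{s,p,\hat K}\le C|\hat u|_{r+1,p,\hat K}$ for $s\le r$. For the polynomial part, $\inf_{\hat v\in\hat V}|\hat P-\hat v|_{j,p,\hat K}\ge c\|\hat L\hat P\|_{0,p,\hat K}$ is a finite-dimensional norm equivalence on the quotient $\mathcal{P}_r/\mathcal{P}_{r,\mathrm{used}}$: both sides are seminorms vanishing on $\mathcal{P}_{r,\mathrm{used}}$, and the crucial implication is that $\inf_{\hat v}|\hat P-\hat v|_{j,p,\hat K}=0$ forces $\hat P$ to differ from some $\hat v\in\hat V$ by a polynomial of degree below $j\le r$, whence $\hat L\hat P=\hat L\hat v=0$; this is exactly the quotient argument behind Corollary \ref{Quotient_Norm_Corollary}. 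Passing from $\hat P$ back to $\hat u$ costs $C|\hat u|_{r+1,p,\hat K}$, and by the interior eigenfunction estimates $|u|_{k,p,G}\sim\lambda^{k/2}\|u\|_{0,p,G}$ (obtained by bootstrapping $-\Delta u=\lambda u$) together with Lemma \ref{Lambda_Power_Lemma}, the ratio $|\hat u|_{r+1,p,\hat K}/\|\hat L\hat u\|_{0,p,\hat K}\sim h_K\lambda^{1/2}$ is small precisely when $h$ is small relative to $\lambda$, absorbing the remainder and leaving the reference inequality with a constant depending only on $\hat K,j,p,\hat V$.

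Finally I would assemble the three displayed estimates from the element inequality $\|u-u_h\|_{j,p,K}\ge|u-u_h|_{j,p,K}\ge c\,h_K^{r-j}\|Lu\|_{0,p,K}$. Summing the $p$-th powers over $K\in\mathcal{T}_h^G$ and invoking Lemma \ref{Lambda_Power_Lemma} gives $\big(\sum_K h_K^{p(j-r)}\|u-u_h\|_{j,p,K}^p\big)^{1/p}\ge c\|Lu\|_{0,p,G}\ge C\lambda^{r/2}$, which is (\ref{lower_convergence_2_2}); under quasi-uniformity $h_K\simeq h$ is pulled out to recover $\|u-u_h\|_{j,p,h}\ge Ch^{r-j}\lambda^{r/2}$ in (\ref{lower_convergence_2_1}); and applying the elementary bound $\|w\|_{0,p,K}\le C h_K^{n(1/p-1/q)}\|w\|_{0,q,K}$ for $q\ge p$ before summing produces the mixed-exponent estimate (\ref{lower_convergence_2_3}). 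The $\lambda^{r/2}$ growth comes entirely from $\|Lu\|_{0,p,G}=\lambda^{\lfloor r/2\rfloor}\|\nabla^{\lceil r/2\rceil-\lfloor r/2\rfloor}u\|_{0,p,G}$, whose $\lambda$-uniform positivity rests on the interior estimates and the hypothesis $\|u\|_{0,p,G}>0$; this, rather than any single routine step, is the delicate point of the proof.
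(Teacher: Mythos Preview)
Your overall strategy is correct and matches the paper: reduce to best approximation, use $Lv_h=0$ elementwise, pass through a degree-$r$ polynomial intermediary, and absorb a higher-order remainder against $\|Lu\|_{0,p,G}$ via Lemma~\ref{Lambda_Power_Lemma}. The polynomial norm equivalence you invoke is exactly the inverse inequality the paper uses on $\Pi_h^ru-v_h$, so the two arguments are close cousins.

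There is, however, a real gap in your execution. You state as your target the \emph{element-wise} bound
\[
|u-v_h|_{j,p,K}\ \ge\ c\,h_K^{r-j}\,\|Lu\|_{0,p,K}
\]
with a uniform constant $c$, and you try to obtain it by absorbing $|\hat u|_{r+1,p,\hat K}$ against $\|\hat L\hat u\|_{0,p,\hat K}$ on each reference element, claiming the ratio is $\sim h_K\lambda^{1/2}$. That ratio equals $h_K\,|u|_{r+1,p,K}/\|Lu\|_{0,p,K}$, and it is \emph{not} uniformly controlled: for $r$ even, $\|Lu\|_{0,p,K}=\lambda^{r/2}\|u\|_{0,p,K}$, and on elements meeting the nodal set of $u$ the denominator can be arbitrarily small while $|u|_{r+1,p,K}$ stays of order one. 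The ``interior eigenfunction estimate'' $|u|_{k,p,G}\sim\lambda^{k/2}\|u\|_{0,p,G}$ you appeal to is a \emph{global} statement on $G$ and does not localize to single elements. Consequently the clean element-wise inequality you want is false in general, and your assembly step, which starts from it, is not justified as written.

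The fix is easy and is precisely what the paper does: never ask for an element-wise lower bound. Work directly on $G$. Introduce a piecewise $\mathcal{P}_r$ approximation $\Pi_h^ru$ (your element-wise Taylor polynomial would do), write
\[
\|Lu\|_{0,p,G}=\|L(u-v_h)\|_{0,p,G,h}\le \|L(u-\Pi_h^ru)\|_{0,p,G,h}+\|L(\Pi_h^ru-v_h)\|_{0,p,G,h},
\]
bound the first term by $Ch^{\delta}\|u\|_{r+\delta,p,G}$, and apply the inverse inequality \emph{globally} to the piecewise-polynomial second term to get $Ch^{j-r}\|\Pi_h^ru-v_h\|_{j,p,G,h}$. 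One more triangle inequality gives
\[
h^{j-r}\|u-v_h\|_{j,p,G,h}\ \ge\ c\,\|Lu\|_{0,p,G}-C h^{\delta}\|u\|_{r+\delta,p,G},
\]
and now the absorption is against the \emph{global} quantity $\|Lu\|_{0,p,G}>0$, which is legitimate for $h$ small. Equivalently, in your framework: keep the element-wise inequality in the two-term form $|u-v_h|_{j,p,K}\ge c\,h_K^{r-j}\|Lu\|_{0,p,K}-C\,h_K^{r+1-j}|u|_{r+1,p,K}$, sum $p$-th powers over $K\in\mathcal T_h^G$ \emph{first}, and absorb only \emph{after} summing. Either route yields (\ref{lower_convergence_2_1})--(\ref{lower_convergence_2_3}).
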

\begin{proof}
First, it is easy to see that the eigenfunctions of problem (\ref{Laplace_Eigenvalue}) cannot
 be polynomials of bounded degree on any subdomain $G\subset\subset\Omega$. It means that we also have
$\|\nabla u\|_{0,p,G}>0.$

Now, we present the derivation for the two cases of the positive integer $r$. Toward this end, let $\Pi_h^r$ denote
a suitable piecewise $\mathcal{P}_r$ interpolation operator (e.g., piecewise $L^2$-projection).

In the first case, $r$ is even. Then we have
\begin{eqnarray*}
\|\Delta^{\frac{r}{2}}u\|_{0,p,G}&=&\|\Delta^{\frac{r}{2}}(u-v_h)\|_{0,p,G,h}\nonumber\\
&\leq& \|\Delta^{\frac{r}{2}}(u-\Pi_h^ru)\|_{0,p,G,h}+\|\Delta^{\frac{r}{2}}(\Pi_h^ru-v_h)\|_{0,p,G,h}\nonumber\\
&\leq&Ch^{\delta}\|u\|_{r+\delta,p,G}+Ch^{j-r}\|\Pi_h^ru-v_h\|_{j,p,G,h}\nonumber\\
&\leq&Ch^{\delta}\|u\|_{r+\delta,p,G}+Ch^{j-r}\|\Pi_h^ru-u\|_{j,p,G,h}\nonumber\\
&&\ \ \ +Ch^{j-r}\|u-v_h\|_{j,p,G,h}\nonumber\\
&\leq& Ch^{\delta}\|u\|_{r+\delta,p,G}+Ch^{j-r}\|u-v_h\|_{j,p,G,h}.
\end{eqnarray*}
It means
\begin{eqnarray}\label{Full_Lower_Bound}
Ch^{j-r}\|u-v_h\|_{j,p,G} &\geq& C\|\Delta^{\frac{r}{2}}u\|_{0,p,G}- Ch^{\delta}\|u\|_{r+\delta,p,G}.
\end{eqnarray}
Together with Lemma \ref{Lambda_Power_Lemma}, when $h$ is small enough, we have
\begin{eqnarray}\label{Lower_Estimate_1}
Ch^{j-r}\|u-v_h\|_{j,p,G,h} &\geq& C\|\Delta^{\frac{r}{2}}u\|_{0,p,G} =C\lambda^{\frac{r}{2}}\|u\|_{0,p,G}
\geq C\lambda^{\frac{r}{2}}.
\end{eqnarray}

In the second case, the integer $r$ is odd. Then we have
\begin{eqnarray*}
\|\nabla\Delta^{\frac{r-1}{2}}u\|_{0,p,G}&=&\|\nabla\Delta^{\frac{r-1}{2}}(u-v_h)\|_{0,p,G}\nonumber\\
&\leq& \|\nabla\Delta^{\frac{r-1}{2}}(u-\Pi_h^ru)\|_{0,p,G}
+\|\nabla\Delta^{\frac{r-1}{2}}(\Pi_h^ru-v_h)\|_{0,p,G}\nonumber\\
&\leq&Ch^{\delta}\|u\|_{r+\delta,p,G}+Ch^{j-r}\|\Pi_h^ru-v_h\|_{j,p,G}\nonumber\\
&\leq&Ch^{\delta}\|u\|_{r+\delta,p,G}+Ch^{j-r}\|\Pi_h^ru-u\|_{j,p,G}\nonumber\\
&&\ \ \ +Ch^{j-r}\|u-v_h\|_{j,p,G}\nonumber\\
&\leq& Ch^{\delta}\|u\|_{r+\delta,p,G}+Ch^{j-r}\|u-v_h\|_{j,p,G}
\end{eqnarray*}
which means
\begin{eqnarray*}
Ch^{j-r}\|u-v_h\|_{j,p,G} &\geq& C\|\nabla\Delta^{\frac{r-1}{2}}u\|_{0,p,G}- Ch^{\delta}\|u\|_{r+\delta,p,G}.
\end{eqnarray*}
Similarly, together with Lemma \ref{Lambda_Power_Lemma}, when $h$ is small enough, the following
estimate holds
\begin{eqnarray}\label{Lower_Estimate_2}
Ch^{j-r}\|u-v_h\|_{j,p,G} &\geq& C\|\nabla\Delta^{\frac{r-1}{2}}u\|_{0,p,G}
=C\lambda^{\frac{r-1}{2}}\|\nabla u\|_{0,p,G}\nonumber\\
&\geq& C_{|\Omega|}\lambda^{\frac{r-1}{2}}\|\nabla u\|_{0,G} \geq C\lambda^{\frac{r}{2}},
\end{eqnarray}
where we used $\|\nabla u\|_{0,\Omega}=\lambda^{1/2}$ and the constant
 $C$ depends on $\|\nabla u\|_{0,G}/\|\nabla u\|_{0,\Omega}$.

Combining (\ref{Lower_Estimate_1}), (\ref{Lower_Estimate_2}) and the  arbitrariness of $v_h$ leads to
\begin{eqnarray*}
\inf_{v_h\in V_h} \frac{\|u-v_h\|_{j,p,G,h}}{h^{r-j}} &\geq & C\lambda^{\frac{r}{2}}.
\end{eqnarray*}
Together with the following property
\begin{eqnarray*}
\frac{\|u-u_h\|_{j,p,h}}{h^{r-j}}&\geq& \frac{\|u-u_h\|_{j,p,G,h}}{h^{r-j}}
\geq \inf_{v_h\in V_h} \frac{\|u-v_h\|_{j,p,G,h}}{h^{r-j}},
\end{eqnarray*}
we obtain the desired result (\ref{lower_convergence_2_1}).
 The error bounds (\ref{lower_convergence_2_2}) and
(\ref{lower_convergence_2_3}) can be proved similarly.
\end{proof}
\begin{remark}
The interior regularity result $u\in W^{r+\delta,p}(G)$ for a subdomain
$G\subset\subset\Omega$ and $\delta>0$
 for the Laplace eigenvalue problem (\ref{Laplace_Eigenvalue}) can be obtained from
\cite[Theorem 8.10 and P.214 with some recursive discussion]{GilbargTrudinger}
 when the right-hand side $f$ is sufficiently smooth.
\end{remark}

Now, we present some conforming and nonconforming elements {whose discretization errors attain lower bounds
 in the forms of (\ref{lower_convergence_2_1}),
(\ref{lower_convergence_2_2}),} and (\ref{lower_convergence_2_3})
 with the help of  Theorem \ref{Lower_bound_Eigenvalue_Corollary}.

First we can obtain the lower bound results for the standard Lagrange type elements
\begin{eqnarray}\label{Lagrange_FEM}
V_h&=&\Big\{v_h|_K\in \mathcal{P}_{\ell}(K)\ {\rm or}\ \mathcal{Q}_1(K),\ \ \ \forall
K\in\mathcal{T}_h\Big\}.
\end{eqnarray}
From  Theorem \ref{Lower_bound_Eigenvalue_Corollary}, the lower bound
results in this section hold with $r=\ell+1$ for $\mathcal{P}_{\ell}(K)$ case and
$r=2$ for $\mathcal{Q}_1(K)$ case.

Then it is also easy to verify the lower bound results for
nonconforming elements Crouzeix-Raviart (CR) and $Q_1$ rotation ($Q_1^{\rm rot}$):
\begin{itemize}
\item
The CR element space, proposed by Crouzeix and Raviart
\cite{CrouzeixRaviart}, is defined on simplicial partitions by
\begin{eqnarray*}
V_{h}&=&\Big\{v\in L^2(\Omega):\ v|_{K} \in \mathcal{P}_1(K), \nonumber\\
 &&\ \ \ \ \int_{F}v|_{K_{1}}ds =\int_{F}v|_{K_{2}}ds\ {\rm if}\ K_{1}\cap K_{2}=F\in \mathcal{E}_h\Big\}.
\end{eqnarray*}
The lower bound result holds with $r=2$ and $\gamma\in {\rm Ind}_2$.
\item
The $Q_{1}^{\rm rot}$ element space, proposed by Rannacher and Turek
\cite{RannacherTurek}, and Arbogast and Chen \cite{ArbogastChen}, is defined on $n$-dimensional block
partitions by
\begin{eqnarray*}
V_{h}&=&\Big\{v\in L^{2}(\Omega):v|_{K} \in Q_{\rm Rot}(K),\nonumber\\
&&\ \ \ \int_{F}v|_{K_{1}}ds =\int_{F}v|_{K_{2}}ds~~{\rm if}\
K_{1}\cap K_{2}=F\in\mathcal{E}_h\Big\},
\end{eqnarray*}
where $Q_{\rm Rot}(K)=\mathcal{P}_1(K)+{\rm span}\big\{x_i^2-x_{i+1}^2\ |\ 1\leq i\leq n-1\big\}$.
The lower bound result holds with $r=2$ and $\gamma$ with $\gamma_i=1, \gamma_j=1$, $1\leq i< j\leq n$.
\end{itemize}
Comparing (\ref{upper_bound}) and (\ref{lower_convergence_2_1}), we see that
all lower and upper bounds of the above examples are sharp if the solution is smooth enough such that
the upper bounds hold.

Now, we investigate the number of reliable eigenvalue approximations
obtained by the finite element method via a rigorous proof for the
procedure outlined in \cite{Zhang}.
The techniques for the analysis are the lower bound result (\ref{lower_convergence_2_1})
and the asymptotic behavior for the $j$-th eigenvalue of the Laplace operator.
In the following analysis, we need the following estimate which comes from \cite[Chapter 6, Section 6.3]{StrangFix}:
\begin{eqnarray}\label{Upper_Bound_Differential}
\|u\|_{r+1,2}\leq C\|\Delta^{\frac{r+1}{2}}u\|_{0,2}\leq C\lambda^{\frac{r+1}{2}}.
\end{eqnarray}
\begin{lemma}\label{Lower_Bound_Eigenvalue_Lemma}
Assume that the mesh $\mathcal{T}_h$ is quasi-uniform and we use the conforming finite element method.
Under the condition of Theorem \ref{Lower_bound_Eigenvalue_Corollary},
 we have the following lower bound for the numerical eigenvalue relative error
\begin{eqnarray}\label{Lower_Bound_Eigenvalue}
\frac{\lambda_h-\lambda}{\lambda}&\geq&C(\lambda h^2)^{r-1} -C(\lambda h^2)^r.
\end{eqnarray}
\end{lemma}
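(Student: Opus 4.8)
The plan is to start from the classical eigenvalue error identity for conforming finite element methods and then feed in the lower bound already established in Theorem~\ref{Lower_bound_Eigenvalue_Corollary} together with the upper bound (\ref{upper_bound}). Since the method is conforming, $V_h\subset H_0^1(\Omega)$ and $a_h(\cdot,\cdot)=a(\cdot,\cdot)$ is the Dirichlet form. Testing the exact relation $a(u,v)=\lambda(u,v)$ against $v=u$ and $v=u_h$, testing the discrete relation (\ref{Poisson_Eigenvalue_FEM}) against $v_h=u_h$, and expanding $a(u-u_h,u-u_h)-\lambda(u-u_h,u-u_h)$, one finds that all cross terms cancel (this cancellation is exactly where conformity, $u_h\in H_0^1(\Omega)$, is used), so that with the normalization $\|u_h\|_0=1$,
\begin{eqnarray*}
\lambda_h-\lambda = |u-u_h|_{1}^2 - \lambda\|u-u_h\|_0^2 .
\end{eqnarray*}
This identity converts the eigenvalue error into a difference of an $H^1$-seminorm term, to be bounded below, and an $L^2$ term, to be bounded above.

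For the first term I would invoke the lower bound (\ref{lower_convergence_2_1}) with $j=1$ and $p=2$, which is available because the hypotheses of Theorem~\ref{Lower_bound_Eigenvalue_Corollary} are assumed and the mesh is quasi-uniform; this gives $\|u-u_h\|_{1,2,h}\ge C h^{r-1}\lambda^{r/2}$. Since $\|u-u_h\|_{1,2,h}^2=\|u-u_h\|_0^2+|u-u_h|_1^2$, the seminorm alone does not directly inherit this bound, so the key step is to subtract off the $L^2$ contribution using the upper bound (\ref{upper_bound}) with $\ell=0$, $s=r$, $p=2$, namely $\|u-u_h\|_0\le C h^{r}\lambda^{r/2}$. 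This yields
\begin{eqnarray*}
|u-u_h|_1^2 \ge C h^{2(r-1)}\lambda^{r} - C h^{2r}\lambda^{r} = h^{2(r-1)}\lambda^{r}\big(C - C h^2\big),
\end{eqnarray*}
where the $\lambda$-powers cancel in the correction term, so for $h$ small enough $|u-u_h|_1^2\ge C h^{2(r-1)}\lambda^{r}$.

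Finally I would bound the subtracted term by the same $L^2$ estimate, $\lambda\|u-u_h\|_0^2\le C h^{2r}\lambda^{r+1}$, and substitute both estimates into the identity to obtain
\begin{eqnarray*}
\lambda_h-\lambda \ge C h^{2(r-1)}\lambda^{r} - C h^{2r}\lambda^{r+1}.
\end{eqnarray*}
Dividing by $\lambda$ and regrouping as $h^{2(r-1)}\lambda^{r-1}=(\lambda h^2)^{r-1}$ and $h^{2r}\lambda^{r}=(\lambda h^2)^{r}$ gives the claimed bound (\ref{Lower_Bound_Eigenvalue}). The main obstacle is the middle step: the available lower bound controls the full broken $H^1$ norm, whereas the eigenvalue identity only sees the $H^1$ seminorm, so one must carefully absorb the $L^2$ part using the upper bound. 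This absorption is exactly what forces the smallness condition on $h$ (equivalently on $\lambda h^2$) and produces the lower-order negative term $-C(\lambda h^2)^r$ in the final estimate.
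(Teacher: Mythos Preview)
Your argument is correct and reaches (\ref{Lower_Bound_Eigenvalue}) with the right powers, but the route differs from the paper's. You start from the exact conforming identity
\[
\lambda_h-\lambda=|u-u_h|_1^2-\lambda\|u-u_h\|_0^2,
\]
pass from the full $H^1$-norm lower bound (\ref{lower_convergence_2_1}) to the seminorm by subtracting the $L^2$ upper bound (\ref{upper_bound}), and then obtain the negative correction $-C(\lambda h^2)^r$ from the term $\lambda\|u-u_h\|_0^2$. The paper instead does not use the identity directly: it returns to the unabsorbed estimate (\ref{Full_Lower_Bound}) and, using (\ref{Upper_Bound_Differential}) with $\delta=1$, derives the quantitative bound $\|u-u_h\|_{1,2,G}\ge Ch^{r-1}\lambda^{r/2}-Ch^r\lambda^{(r+1)/2}$, so that the negative correction appears already at the level of the $H^1$ norm; it then invokes the Babu\v{s}ka--Osborn two-sided equivalence $c\|u-u_h\|_1^2\le\lambda_h-\lambda\le C\|u-u_h\|_1^2$ from \cite{Babuska2} rather than the exact identity. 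Your approach is more self-contained (the identity replaces the cited equivalence) and avoids the squaring of a two-term lower bound; the paper's approach has the advantage that the $\lambda$-dependence of the ``$h$ small enough'' condition hidden in (\ref{lower_convergence_2_1}) is made explicit through (\ref{Full_Lower_Bound}), which matters for the uniform-in-$\lambda$ use of the lemma in Theorem~\ref{Reliable_Number_Laplace_Theorem}. Both arguments are valid; if you want constants independent of $\lambda$ throughout, it is cleaner to quote (\ref{Full_Lower_Bound}) with $\delta=1$ instead of (\ref{lower_convergence_2_1}) when bounding $|u-u_h|_1$ from below.
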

\begin{proof}
When $r$ is even, from (\ref{Full_Lower_Bound}) and (\ref{Upper_Bound_Differential}), we have
\begin{eqnarray}\label{Lower_Bound_u_v_h}
\|u-v_h\|_{1,2,G}&\geq& Ch^{r-1}\|\Delta^{\frac{r}{2}}u\|_{0,2,G} -Ch^r\|u\|_{r+1,2,G}\nonumber\\
&\geq& Ch^{r-1}\|\Delta^{\frac{r}{2}}u\|_{0,2,G} -Ch^r\lambda^{\frac{r+1}{2}}\nonumber\\
&\geq&Ch^{r-1}\lambda^{\frac{r}{2}} -Ch^r\lambda^{\frac{r+1}{2}},\ \ \ \ \forall v_h\in V_h.
\end{eqnarray}
The desired result (\ref{Lower_Bound_Eigenvalue}) can be obtained by combing (\ref{lower_convergence_2_1})
and the following result \cite{Babuska2}
\begin{eqnarray*}
c\|u-u_h\|_1^2\leq \lambda_h-\lambda\leq C\|u-u_h\|_1^2.
\end{eqnarray*}
When $r$ is odd, we can also deduce the desired result (\ref{Lower_Bound_Eigenvalue}) in the similar way.
\end{proof}

In \cite{Weyl}, Weyl gives the following asymptotic behavior for the $j$-th
eigenvalue of the Laplace operator:
\begin{eqnarray}\label{Weyl_Result}
\lambda_j&\approx& 4\pi^2\left(\frac{j}{\omega_n|\Omega|}\right)^{\frac{2}{n}},
\end{eqnarray}
where $\omega_n=\pi^{n/2}/\Gamma(1+n/2)$ denotes the volume of the unit ball in $\mathcal{R}^n$.

Based on the above preparation, we come to prove the results in \cite{Zhang}.
\begin{theorem}\label{Reliable_Number_Laplace_Theorem}
Let $N$ denote the total degrees of freedom in the finite element space $V_h$.
Assume the conditions of Lemma \ref{Lower_Bound_Eigenvalue_Lemma} are satisfied and
$j_{\theta}=N^{\theta}$ for $\theta\in [0,1)$. Then the relative error of the
$j_{\theta}$-th numerical eigenvalue
has the following lower bound
\begin{eqnarray}\label{Error_j_th_Eigenvalue}
\frac{\lambda_{j_{\theta},h}-\lambda_{j_{\theta}}}{\lambda_{j_{\theta}}}&\geq& C(r-1)^{2\theta(r-1)}h^{2(r-1)(1-\theta)}
\end{eqnarray}
and the absolute error of the
$j_{\theta}$-th numerical eigenvalue
has the following lower bound
\begin{eqnarray}\label{Error_j_th_Eigenvalue_Absolute}
\lambda_{j_{\theta},h}-\lambda_{j_{\theta}}&\geq& C(r-1)^{2\theta r}
h^{2r(1-\theta)-2},
\end{eqnarray}
when $N$ is large enough.

On the contrary, the number $j_{\theta}$ of the ``reliable"
numerical eigenvalues with relative error of $\lambda_{j_{\theta}}$ converging
at the rate $h^{2\theta(r-1)}$ for $\theta\in (0,1]$, has the following estimate
\begin{eqnarray}\label{Numerber_Eigenvalue_Laplace}
j_{\theta} &\leq&C(r-1)^{(1-\theta)n}N^{1-\theta}.
\end{eqnarray}
Furthermore, the number $j_{\theta}$ of the ``reliable"
numerical eigenvalues with absolute error of $\lambda_{j_{\theta}}$ converging
at the rate $h^{\theta (2(r-1))}$ for $\theta\in (0,1]$, has the following estimate
\begin{eqnarray}\label{Numerber_Eigenvalue_Laplace_Absolute}
j_{\theta} &\leq& C(r-1)^{\frac{r-1}{r}(1-\theta)n}N^{\frac{(r-1)(1-\theta)}{r}}.
\end{eqnarray}
\end{theorem}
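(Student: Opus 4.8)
The plan is to read off both the error lower bounds and the eigenvalue counts by feeding three facts into one another: the relative-error estimate of Lemma~\ref{Lower_Bound_Eigenvalue_Lemma}, Weyl's asymptotics \eqref{Weyl_Result}, and the scaling between the total number of degrees of freedom $N$ and the mesh size $h$. On a shape-regular quasi-uniform partition of $\Omega\subset\mathcal{R}^n$ with piecewise polynomials of degree less than $r$, the number of elements is $\sim|\Omega|h^{-n}$ and the local dimension scales like $(r-1)^n$, so that
\[
N\sim (r-1)^n h^{-n},\qquad\text{equivalently}\qquad N^{1/n}\sim (r-1)\,h^{-1}.
\]
Combined with \eqref{Weyl_Result} in the form $\lambda_{j_\theta}\approx 4\pi^2\big(j_\theta/(\omega_n|\Omega|)\big)^{2/n}=CN^{2\theta/n}$ for $j_\theta=N^\theta$, this is the dictionary that turns an index into powers of $h$ and of $(r-1)$.

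First I would establish the relative bound \eqref{Error_j_th_Eigenvalue}. Applying Lemma~\ref{Lower_Bound_Eigenvalue_Lemma} to the pair $(\lambda_{j_\theta},u)$ gives
\[
\frac{\lambda_{j_\theta,h}-\lambda_{j_\theta}}{\lambda_{j_\theta}}
\;\geq\;(\lambda_{j_\theta}h^2)^{r-1}\big[\,C-C(\lambda_{j_\theta}h^2)\,\big].
\]
The key observation is that $\lambda_{j_\theta}h^2\approx CN^{2\theta/n}h^2\approx C(r-1)^{2\theta}h^{2(1-\theta)}\to0$ as $h\to0$ for every fixed $\theta<1$, so once $N$ is large enough the bracket is bounded below by a positive constant and the subtracted $O((\lambda_{j_\theta}h^2)^{r})$ term is absorbed. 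There remains $\frac{\lambda_{j_\theta,h}-\lambda_{j_\theta}}{\lambda_{j_\theta}}\geq C(\lambda_{j_\theta}h^2)^{r-1}$, and substituting $N^{2\theta/n}\sim(r-1)^{2\theta}h^{-2\theta}$ yields
\[
(\lambda_{j_\theta}h^2)^{r-1}\sim\big((r-1)^{2\theta}h^{2(1-\theta)}\big)^{r-1}
=(r-1)^{2\theta(r-1)}h^{2(1-\theta)(r-1)},
\]
which is \eqref{Error_j_th_Eigenvalue}. The absolute bound \eqref{Error_j_th_Eigenvalue_Absolute} then follows at once by multiplying through by $\lambda_{j_\theta}$: from $\lambda_{j_\theta,h}-\lambda_{j_\theta}\geq C\lambda_{j_\theta}^{\,r}h^{2(r-1)}$ and $\lambda_{j_\theta}^{\,r}\sim(r-1)^{2\theta r}h^{-2\theta r}$ one reads off $C(r-1)^{2\theta r}h^{2r(1-\theta)-2}$.

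For the two counting statements I would invert the same estimate through a \emph{necessary} condition for reliability. If the $j$-th eigenvalue converges in relative error at the rate $h^{2\theta(r-1)}$, then its error is at most $Ch^{2\theta(r-1)}$; since the lower bound $C(\lambda_j h^2)^{r-1}$ can never exceed the true error, this forces $(\lambda_j h^2)^{r-1}\leq Ch^{2\theta(r-1)}$, i.e.\ $\lambda_j\leq Ch^{-2(1-\theta)}$. Solving Weyl's law for the index, $j\leq C|\Omega|\lambda_j^{n/2}\leq Ch^{-n(1-\theta)}$, and feeding in the $N$--$h$ dictionary converts this into the $N^{1-\theta}$ scaling of \eqref{Numerber_Eigenvalue_Laplace}. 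The absolute count \eqref{Numerber_Eigenvalue_Laplace_Absolute} is identical in structure, starting instead from $\lambda_j^{\,r}h^{2(r-1)}\leq Ch^{2\theta(r-1)}$, hence $\lambda_j\leq Ch^{-2(r-1)(1-\theta)/r}$ and $j\leq Ch^{-n(r-1)(1-\theta)/r}$, which pins the exponent of $N$ at $(r-1)(1-\theta)/r$.

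The main obstacle is the bookkeeping of the $(r-1)$-dependence rather than any analytic content. Two points need care. First, one must check that the regime ``$N$ large enough'' genuinely makes $\lambda_{j_\theta}h^2$ small, so that the negative term in Lemma~\ref{Lower_Bound_Eigenvalue_Lemma} is subdominant; this is precisely where the restriction $\theta<1$ and the purely asymptotic nature of all four bounds are essential. Second, and most delicately, the exact power of $(r-1)$ accompanying the $N$-scaling in \eqref{Numerber_Eigenvalue_Laplace}--\eqref{Numerber_Eigenvalue_Laplace_Absolute} is governed entirely by the $N\sim(r-1)^n h^{-n}$ relation: the same factor $N^{1/n}\sim(r-1)h^{-1}$ that supplies $(r-1)^{2\theta(r-1)}$ in the forward direction reappears \emph{inverted} when one solves for $j$ in terms of $N$, so the sign of the $(r-1)$-exponent flips under the inversion. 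Tracking this exponent consistently through the reliability inequality is the crux of the argument, and it is the step I would verify with the greatest care.
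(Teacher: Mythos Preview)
Your proposal is correct and follows essentially the same route as the paper: both combine Lemma~\ref{Lower_Bound_Eigenvalue_Lemma}, Weyl's asymptotics~\eqref{Weyl_Result}, and the degree-of-freedom relation $N\sim(r-1)^n h^{-n}$, then note that $\lambda_{j_\theta}h^2\to 0$ for $\theta<1$ so the subtracted term is subdominant. Your derivation is in fact more explicit than the paper's, which merely cites the three ingredients and the inequality $h^{2\theta(r-1)}\geq C(j^{2/n}h^2)^{r-1}(1-j^{2/n}h^2)$ without spelling out the algebra; your tracking of the $(r-1)$-powers and the inversion step for the counting bounds is exactly what the paper leaves to the reader.
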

\begin{proof}
First, the piecewise polynomial space of degree $r-1$ has the total degree of freedom
$N=\mathcal{O}((r-1)^nh^{-n})$ which means
\begin{eqnarray}\label{Condition_4}
h&=&\mathcal{O}\big(N^{-\frac{1}{n}}(r-1)^{-1}\big).
\end{eqnarray}
Then the desired results (\ref{Error_j_th_Eigenvalue}) and (\ref{Error_j_th_Eigenvalue_Absolute})
can be derived by combining
 (\ref{Lower_Bound_Eigenvalue}), (\ref{Weyl_Result}), (\ref{Condition_4}) and the following estimate
 \begin{eqnarray}\label{Less_Than_1}
 j_{\theta}^{\frac{2}{n}}h^2 \leq C(r-1)^{-2}N^{\frac{2}{n}(\theta-1)}\ll 1,\ \ \  {\rm when}\ \theta<1\
 {\rm and}\ N\ {\rm is\ large\ enough}.
 \end{eqnarray}

By using the lower bound results (\ref{Lower_Bound_Eigenvalue}) and (\ref{Weyl_Result}) and
along the way in \cite{Zhang}, we can give the proof for (\ref{Numerber_Eigenvalue_Laplace}).
 From (\ref{Lower_Bound_Eigenvalue}) and (\ref{Weyl_Result}), we have
\begin{eqnarray*}
\frac{\lambda_{j,h}-\lambda_j}{\lambda_j}&\geq& C\big(j^{\frac{2}{n}}h^2\big)^{r-1}-C\big(j^{\frac{2}{n}}h^2\big)^{r}.
\end{eqnarray*}
Then the desired convergence order $h^{\theta(2r-1)}$ leads to the following estimate
\begin{eqnarray}\label{Condition_5}
h^{\theta(2(r-1))}&\geq& C\big(j^{\frac{2}{n}}h^2\big)^{r-1}(1-j^{\frac{2}{n}}h^2).
\end{eqnarray}
Combining (\ref{Condition_4}),  (\ref{Less_Than_1}) and (\ref{Condition_5}) leads to the desired number $j_{\theta}$
satisfying the inequality (\ref{Numerber_Eigenvalue_Laplace}) and (\ref{Numerber_Eigenvalue_Laplace_Absolute})
can be proved similarly.
\end{proof}


\section{Lower bounds for finite element approximation of $2m$-th order elliptic eigenvalue problem}

We consider similar lower bounds of the discretization error for the $2m$-th order elliptic eigenvalue
 problem by the finite element method. This is a natural generalization of the results in Section
 \ref{Lower_Bound_Elliptic_Eigenvalue_Section}.

 The $2m$-th order Dirichlet elliptic eigenvalue problem for a given integer $m\geq 1$ is defined as
\begin{equation}\label{2m_Problem_Eigenvalue}
\left\{
\begin{array}{rcl}
(-1)^m\Delta^{m}u&=&\lambda u\ \ \ {\rm in}\ \Omega,\\
\frac{\partial^ju}{\partial^j\mathbf \nu}&=&0\ \ \ \ \ {\rm on}\ \partial\Omega\ {\rm and}\ 0\leq j\leq m-1,\\
\|u\|_0&=&1,
\end{array}
\right.
\end{equation}
where $\nu$ denotes the unit outer normal.
The corresponding weak form of problem (\ref{2m_Problem_Eigenvalue}) is defined as follows:

 Find $(\lambda,u)\in\mathcal{R}\times H_0^m(\Omega)$ such that $\|u\|_0=1$ and
\begin{eqnarray}\label{2m_Eigenvalue}
a(u,v)&=&\lambda (u,v)\ \ \ \ \forall v\in H_0^m(\Omega),
\end{eqnarray}
where
$$a(u,v)=\int_{\Omega}\nabla^mu\nabla^mv\;d\Omega.$$

Based on the partition $\mathcal{T}_h$ of $\bar{\Omega}$, we build a suitable
finite element space $V_h$ (conforming or nonconforming for the $2m$-th order elliptic problem)
 with a piecewise polynomial of a degree less than $r$ { and
define the corresponding discrete eigenvalue problem in the finite element space:}

Find $(\lambda_h,u_h)\in\mathcal{R}\times V_h$ such that $\|u_h\|_0=1$ and
\begin{eqnarray}\label{2m_Eigenvalue_FEM}
a_h(u_h,v_h)&=&\lambda_h (u_h,v_h)\ \ \ \ \forall v_h\in V_h,
\end{eqnarray}
where $$a_h(u_h,v_h)=\sum_{K\in\mathcal{T}_h}\int_{K}\nabla^mu_h\nabla^m v_hdK.$$

{
Now we conduct the corresponding lower-bound analysis for the eigenvalue problem (\ref{2m_Eigenvalue}).}
Similarly to Lemma \ref{Lambda_Power_Lemma}, the following estimate for the eigenfunction
 norm by the eigenvalue holds.
\begin{lemma}\label{Lambda_Power_Lemma_2m}
For the eigenvalue problem (\ref{2m_Problem_Eigenvalue}), the following property holds
\begin{eqnarray}\label{Estimate_2m_eigenvalue}
\|\nabla^{mi}\Delta^{m\ell}u\|_{0,p,G}= \lambda^{\ell}\|\nabla^{mi}u\|_{0,p,G},
\end{eqnarray}
where $G\subset\subset \Omega$, $i=0,1$ and $\ell=0,1,\cdots$.
\end{lemma}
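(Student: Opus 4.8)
The plan is to mirror the elementary computation used in the proof of Lemma~\ref{Lambda_Power_Lemma}, exploiting the fact that the eigenfunction $u$ is, pointwise, an eigenvector of the iterated operator $\Delta^m$. First I would rewrite the governing equation (\ref{2m_Problem_Eigenvalue}) in the form $\Delta^m u = (-1)^m\lambda u$, so that $u$ is an eigenfunction of $\Delta^m$ with associated scalar $(-1)^m\lambda$. This is the only place the differential equation enters, and it replaces the single factor of $\lambda$ that appeared in the second-order case.

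The key step is to iterate this relation $\ell$ times. Since $\Delta^{m\ell}=(\Delta^m)^\ell$ and each application of $\Delta^m$ multiplies $u$ by the constant $(-1)^m\lambda$, I obtain the pointwise identity $\Delta^{m\ell}u=\bigl((-1)^m\lambda\bigr)^\ell u=(-1)^{m\ell}\lambda^\ell u$ as functions on $\Omega$, and in particular on any interior subdomain $G\subset\subset\Omega$. Applying the operator $\nabla^{mi}$ to both sides then pulls the constant $(-1)^{m\ell}\lambda^\ell$ outside the differentiation, giving
\begin{equation*}
\nabla^{mi}\Delta^{m\ell}u=(-1)^{m\ell}\lambda^\ell\,\nabla^{mi}u,\qquad i=0,1.
\end{equation*}
Taking the $L^p(G)$ norm of each side and using $|(-1)^{m\ell}|=1$ together with the positivity $\lambda>0$ of the eigenvalue (which follows from the positive-definiteness of the form $a(\cdot,\cdot)$ on $H_0^m(\Omega)$) yields exactly the claimed identity (\ref{Estimate_2m_eigenvalue}).

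There is no substantial obstacle here; the argument is a direct algebraic consequence of the eigenvalue equation. The only point worth verifying is that every derivative appearing in (\ref{Estimate_2m_eigenvalue}) is well defined, i.e.\ that $u$ enjoys enough interior regularity on $G$ for $\nabla^{mi}\Delta^{m\ell}u$ to make classical sense; this is furnished by interior elliptic regularity for the $2m$-th order operator, in the same spirit as the Remark following Theorem~\ref{Lower_bound_Eigenvalue_Corollary}. With that regularity in hand, the reduction of $\Delta^{m\ell}$ to the scalar $(-1)^{m\ell}\lambda^\ell$ and the subsequent norm computation complete the proof.
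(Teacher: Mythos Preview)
Your argument is correct and is precisely the approach the paper intends: the paper states this lemma with the remark ``Similarly to Lemma~\ref{Lambda_Power_Lemma}'' and gives no separate proof, while Lemma~\ref{Lambda_Power_Lemma} itself is proved by exactly the pointwise identity you write, followed by taking norms. Your added care with the sign $(-1)^{m\ell}$ and the mention of interior regularity are welcome refinements but do not change the route.
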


From Lemma \ref{Lambda_Power_Lemma_2m}, we state the following
lower bound results of the discretization error.
\begin{theorem}\label{Lower_bound_Eigenvalue_Corollary_2m}
Let $r=mi+2m\ell$.
 Assume that $\nabla^{mi}\Delta^{m\ell}v_h=0$ for $i=0,1$ and $\ell=0,1,\cdots$
 for all $K\in\mathcal{T}_h${, $v_h\in V_h$,
 where the partition $\mathcal{T}_h$ is quasi-uniform and shape regular.
 If} there is a domain $G\subset\subset \Omega$ such that
 $\|u\|_{0,p,G}>0${, then} for any given exact eigenpair
 $(\lambda,u)$, its corresponding eigenpair approximation
$(\lambda_h, u_h)\in \mathcal{R}\times V_h$ in  (\ref{2m_Eigenvalue_FEM}) satisfies the following
lower bound of the discretization error
\begin{equation}\label{lower_convergence_2_1_2m}
\|u-u_h\|_{j,p,h}\geq Ch^{r-j}\lambda^{\frac{r}{2m}},\ \ \ \ 0\leq j\leq r,
\end{equation}
where the mesh size $h$ is small enough, $2\leq p\leq \infty$ and $C$ is a positive constant
independent of $u$, $\lambda$ and the mesh size $h$.

Furthermore, if the family $\{\mathcal{T}_h\}$ of partitions  is
  only shape regular, then for any given exact eigenpair $(\lambda,u)$, its corresponding
 approximation $(\lambda_h, u_h)$
has the following lower bounds of the discretization error
\begin{equation}\label{lower_convergence_2_2_2m}
\left(\sum_{K\in\mathcal{T}_h^G}h_K^{p(j-r)}
\big\|u-u_h\big\|_{j,p,K}^p\right)^{\frac{1}{p}} \geq C\lambda^{\frac{r}{2m}},\ \ \ \ 0\leq j\leq r
\end{equation}
and
\begin{equation}\label{lower_convergence_2_3_2m}
\left(\sum_{K\in\mathcal{T}_h^G}h_K^{p\big((j-r)+n(\frac{1}{p}-\frac{1}{q})\big)}
\big\|u-u_h\big\|_{j,q,K}^p\right)^{\frac{1}{p}} \geq C\lambda^{\frac{r}{2m}},\ \ \ \ 0\leq j\leq r.
\end{equation}
where the mesh size $h$ is small enough, $2\leq p <\infty$, $p\leq q\leq \infty$,  $C$ are positive
constants independent of $u$, $\lambda$ and the mesh size $h$.
\end{theorem}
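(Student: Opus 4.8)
The plan is to follow the argument of Theorem~\ref{Lower_bound_Eigenvalue_Corollary} almost verbatim, replacing the operator used there by $\nabla^{mi}\Delta^{m\ell}$, which by the relation $r=mi+2m\ell$ also has total differential order $r$. First I would record two facts. An eigenfunction of (\ref{2m_Problem_Eigenvalue}) cannot agree with a polynomial of bounded degree on any $G\subset\subset\Omega$, so $\|\nabla^{mi}u\|_{0,p,G}>0$ for $i=0,1$; and testing the weak form (\ref{2m_Eigenvalue}) with $v=u$ gives $\|\nabla^m u\|_{0,\Omega}^2=a(u,u)=\lambda\|u\|_0^2=\lambda$, i.e.\ $\|\nabla^m u\|_{0,\Omega}=\lambda^{1/2}$, the $2m$-th order counterpart of $\|\nabla u\|_0=\lambda^{1/2}$. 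I would then fix a piecewise $\mathcal{P}_r$ interpolation operator $\Pi_h^r$ (for instance the local $L^2$-projection) and treat separately the cases $i=0$ (so $r=2m\ell$) and $i=1$ (so $r=m+2m\ell$).

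In both cases the hypothesis $\nabla^{mi}\Delta^{m\ell}v_h=0$ on each $K$ permits writing $\|\nabla^{mi}\Delta^{m\ell}u\|_{0,p,G}=\|\nabla^{mi}\Delta^{m\ell}(u-v_h)\|_{0,p,G,h}$, after which I insert $\Pi_h^r u$ and apply the triangle inequality. The interpolation-error part is bounded by $Ch^\delta\|u\|_{r+\delta,p,G}$ (using the interior regularity $u\in W^{r+\delta,p}(G)$), while the piecewise-polynomial part $\nabla^{mi}\Delta^{m\ell}(\Pi_h^r u-v_h)$ is handled by an inverse estimate on the quasi-uniform shape-regular mesh, producing a factor $Ch^{j-r}\|\Pi_h^r u-v_h\|_{j,p,G,h}$; one more triangle inequality together with the interpolation bound $\|\Pi_h^r u-u\|_{j,p,G,h}\le Ch^{r-j+\delta}\|u\|_{r+\delta,p,G}$ absorbs the interpolation term into the remainder. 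This yields the key inequality
\[
Ch^{j-r}\|u-v_h\|_{j,p,G,h}\ \ge\ C\|\nabla^{mi}\Delta^{m\ell}u\|_{0,p,G}-Ch^\delta\|u\|_{r+\delta,p,G}.
\]

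Lemma~\ref{Lambda_Power_Lemma_2m} then evaluates the leading term. For $i=0$ it equals $\lambda^{\ell}\|u\|_{0,p,G}=\lambda^{r/(2m)}\|u\|_{0,p,G}\ge C\lambda^{r/(2m)}$; for $i=1$ it equals $\lambda^{\ell}\|\nabla^m u\|_{0,p,G}\ge C_{|\Omega|}\lambda^{\ell}\|\nabla^m u\|_{0,G}\ge C\lambda^{\ell+1/2}=C\lambda^{r/(2m)}$, the extra half-power coming from $\|\nabla^m u\|_0=\lambda^{1/2}$ and the constant depending on the fixed ratio $\|\nabla^m u\|_{0,G}/\|\nabla^m u\|_{0,\Omega}$. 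Taking $h$ small enough renders the remainder $Ch^\delta\|u\|_{r+\delta,p,G}$ negligible against $C\lambda^{r/(2m)}$, so $Ch^{j-r}\|u-v_h\|_{j,p,G,h}\ge C\lambda^{r/(2m)}$ uniformly in $v_h$. Passing to the infimum and using $\|u-u_h\|_{j,p,h}\ge\|u-u_h\|_{j,p,G,h}\ge\inf_{v_h}\|u-v_h\|_{j,p,G,h}$ gives (\ref{lower_convergence_2_1_2m}). The estimates (\ref{lower_convergence_2_2_2m}) and (\ref{lower_convergence_2_3_2m}) follow by retaining the bound element by element over $K\in\mathcal{T}_h^G$, so that only shape regularity is required, and by inserting for (\ref{lower_convergence_2_3_2m}) the scaling between the $L^q(K)$ and $L^p(K)$ norms that supplies the factor $h_K^{n(1/p-1/q)}$.

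I expect the principal obstacle to lie in the $i=1$ case: one must check that the missing half-power of $\lambda$ is furnished precisely by $\|\nabla^m u\|_0=\lambda^{1/2}$, and that descending from the $L^p(G)$ norm to the $L^2(G)$ norm (valid for $2\le p$ on the bounded set $G$) and then comparing with the global $L^2(\Omega)$ norm preserves positivity---this is exactly where the non-degeneracy hypothesis $\|u\|_{0,p,G}>0$ and the interior regularity enter. The remaining ingredients are routine adaptations of the scaling and inverse-inequality arguments already deployed for the Laplace problem.
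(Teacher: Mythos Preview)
Your proposal is correct and follows essentially the same approach as the paper's own proof: the same split into the cases $i=0$ and $i=1$, the same insertion of $\Pi_h^r u$ with interpolation plus inverse estimates to reach the key inequality $Ch^{j-r}\|u-v_h\|_{j,p,G,h}\ge C\|\nabla^{mi}\Delta^{m\ell}u\|_{0,p,G}-Ch^{\delta}\|u\|_{r+\delta,p,G}$, and the same invocation of Lemma~\ref{Lambda_Power_Lemma_2m} together with $\|\nabla^m u\|_{0,\Omega}=\lambda^{1/2}$ to extract $\lambda^{r/(2m)}$. Your remarks on (\ref{lower_convergence_2_2_2m})--(\ref{lower_convergence_2_3_2m}) are in fact slightly more explicit than the paper, which simply states that they ``can be proved similarly.''
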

\begin{proof}
First, it is also easy to { see} that the eigenfunctions of problem (\ref{2m_Problem_Eigenvalue})
cannot be polynomials of bounded degree on any subdomain $G\subset\subset\Omega$.
It means that we also have $\|\nabla^m u\|_{0,p,G}>0.$

 Now, we present the derivation for the two cases of the integer $i$.
In the first case, $i=0$ and $r=2m\ell$. Then we have
\begin{eqnarray}\label{Inequality_3}
\|\Delta^{m\ell}u\|_{0,p,G}&=&\|\Delta^{m\ell}(u-v_h)\|_{0,p,G,h}\nonumber\\
&\leq& \|\Delta^{m\ell}(u-\Pi_h^ru)\|_{0,p,G,h}+\|\Delta^{m\ell}(\Pi_h^ru-v_h)\|_{0,p,G,h}\nonumber\\
&\leq&Ch^{\delta}\|u\|_{r+\delta,p,G}+Ch^{j-r}\|\Pi_h^ru-v_h\|_{j,p,G,h}\nonumber\\
&\leq&Ch^{\delta}\|u\|_{r+\delta,p,G}+Ch^{j-r}\|\Pi_h^ru-u\|_{j,p,G,h}\nonumber\\
&&\ \ \ +Ch^{j-r}\|u-v_h\|_{j,p,G,h}\nonumber\\
&\leq& Ch^{\delta}\|u\|_{r+\delta,p,G}+Ch^{j-r}\|u-v_h\|_{j,p,G,h},
\end{eqnarray}
where $\Pi_h^r$ is defined as in the proof of Theorem \ref{Lower_bound_Eigenvalue_Corollary}.
The inequality (\ref{Inequality_3}) means the following inequality holds
\begin{eqnarray}\label{Lower_Bound_Full_2m}
Ch^{j-r}\|u-v_h\|_{j,p,G,h} &\geq& C\|\Delta^{m\ell}u\|_{0,p,G}- Ch^{\delta}\|u\|_{r+\delta,p,G}.
\end{eqnarray}
Together with Lemma \ref{Lambda_Power_Lemma_2m}, when $h$ is small enough, we have
\begin{eqnarray}\label{Lower_Estimate_1_2m}
Ch^{j-r}\|u-v_h\|_{j,p,G,h} &\geq& C\|\Delta^{m\ell}u\|_{0,p,G} =C\lambda^{\ell}\|u\|_{0,p,G}
\geq C\lambda^{\frac{r}{2m}}.
\end{eqnarray}

In the second case, the integer $i=1$ and $r=m+2m\ell$. Then we have
\begin{eqnarray*}
\|\nabla^{m}\Delta^{m\ell}u\|_{0,p,G}&=&\|\nabla^{m}\Delta^{m\ell}(u-v_h)\|_{0,p,G,h}\nonumber\\
&\leq& \|\nabla^{m}\Delta^{m\ell}(u-\Pi_h^ru)\|_{0,p,G,h}
+\|\nabla^{m}\Delta^{m\ell}(\Pi_h^ru-v_h)\|_{0,p,G,h}\nonumber\\
&\leq&Ch^{\delta}\|u\|_{r+\delta,p,G}+Ch^{j-r}\|\Pi_h^ru-v_h\|_{j,p,G,h}\nonumber\\
&\leq&Ch^{\delta}\|u\|_{r+\delta,p,G}+Ch^{j-r}\|\Pi_h^ru-u\|_{j,p,G,h}\nonumber\\
&&\ \ \ +Ch^{j-r}\|u-v_h\|_{j,p,G,h}\nonumber\\
&\leq& Ch^{\delta}\|u\|_{r+\delta,p,G}+Ch^{j-r}\|u-v_h\|_{j,p,G,h}
\end{eqnarray*}
which means
\begin{eqnarray*}
Ch^{j-r}\|u-v_h\|_{j,p,G,h} &\geq& C\|\nabla^{m}\Delta^{m\ell}u\|_{0,p,G}- Ch^{\delta}\|u\|_{r+\delta,p,G}.
\end{eqnarray*}
Similarly, together with Lemma \ref{Lambda_Power_Lemma_2m}, when $h$ is small enough, we have
\begin{eqnarray}\label{Lower_Estimate_2_2m}
Ch^{j-r}\|u-v_h\|_{j,p,G,h} &\geq& C\|\nabla^{m}\Delta^{m\ell}u\|_{0,p,G}
=C\lambda^{\ell}\|\nabla^m u\|_{0,p,G}\nonumber\\
&\geq&C_{|G|}\lambda^{\ell}\|\nabla^m u\|_{0,G} \geq C\lambda^{\ell+\frac{1}{2}}
=C\lambda^{\frac{r}{2m}},
\end{eqnarray}
where the constant $C$ depends on $\|\nabla^m u\|_{0,G}/\|\nabla^m u\|_{0,\Omega}$ and
 the equality $\|\nabla^m u\|_{0,\Omega}=\lambda^{1/2}$ is used.

Combining (\ref{Lower_Estimate_1_2m}), (\ref{Lower_Estimate_2_2m}) and the  arbitrariness of $v_h$ leads to
\begin{eqnarray*}
\inf_{v_h\in V_h} \frac{\|u-v_h\|_{j,p,G,h}}{h^{r-j}} &\geq & C\lambda^{\frac{r}{2m}}.
\end{eqnarray*}
Together with the following property
\begin{eqnarray*}
\frac{\|u-u_h\|_{j,p,h}}{h^{r-j}}&\geq& \frac{\|u-u_h\|_{j,p,G,h}}{h^{r-j}}
\geq \inf_{v_h\in V_h} \frac{\|u-v_h\|_{j,p,G,h}}{h^{r-j}},
\end{eqnarray*}
we { obtain} the desired result (\ref{lower_convergence_2_1_2m}).
 { The error bounds} (\ref{lower_convergence_2_2_2m}) and
(\ref{lower_convergence_2_3_2m}) can be proved similarly.
\end{proof}

\begin{remark}
The interior regularity result $u\in W^{r+\delta,p}(G)$ for a subdomain $G\subset\subset\Omega$
and $\delta>0$ for problem (\ref{2m_Problem_Eigenvalue}) can be obtained from
\cite[Theorem 7.1.2]{Grisvard} with $f$ replaced by $\lambda u$.
\end{remark}
Now, we discuss some conforming and nonconforming elements that can produce the lower bound of
the discretization error with the help of Theorem \ref{Lower_bound_Eigenvalue_Corollary_2m}.
For the two-dimensional case ($n=2$) there exist elements such as
the Argyris and Hsieh--Clough--Tocher elements \cite{Ciarlet} that yield lower-bound results from
Theorem \ref{Lower_bound_Eigenvalue_Corollary_2m} for the biharmonic problem. The lower-bound results
in this section hold for the Argyris element with $m=2$, $r=6$ ($i=1, \ell=1$) and
 ${\rm Ind}_{6,{\rm used}}=\{\alpha:\ |\alpha|=6\}${; and
for} the Hsieh--Clough--Tocher element with $m=2$, $r=4$ ($i=0, \ell=1$),
${\rm Ind}_{6,{\rm used}}=\{\alpha:\ |\alpha|=4\}$.

Similarly, we can investigate the number of reliable numerical eigenvalues
obtained from the finite element method  for the biharmonic eigenvalue problem (in (\ref{2m_Eigenvalue})
with $m=2$).
The ingredients for the analysis are the lower bound result (\ref{Lower_Bound_Full_2m})
and the asymptotic behavior for the $j$-th eigenvalue of the biharmonic operator.

In \cite{Pleijel}, Pleijel proves the following asymptotic behavior for the $n$-th
eigenvalue of the biharmonic operator:
\begin{eqnarray}\label{Pleijel_Result}
\lambda_j&\approx& 16\pi^4\left(\frac{j}{\omega_n|\Omega|}\right)^{\frac{4}{n}}.
\end{eqnarray}
For simplicity, we only state the following result and the proof can be given in the similar way
of Theorem \ref{Reliable_Number_Laplace_Theorem}.
\begin{theorem}\label{Reliable_Number_Biharmonic_Theorem}
Let $N$ denote the total degrees of freedom in the finite element space $V_h$.
Assume the mesh $\mathcal{T}_h$ is quasi-uniform and we use the conforming finite element method to solve
the biharmonic eigenvalue problem ((\ref{2m_Eigenvalue}) with $m=2$) and set
$j_{\theta}=N^{\theta}$ for $\theta\in [0,1)$. Then the relative error of the $j_{\theta}$-th numerical eigenvalue
has the following lower bound
\begin{eqnarray}\label{Error_j_th_Eigenvalue_Biharmonic}
\frac{\lambda_{j_{\theta},h}-\lambda_{j_{\theta}}}{\lambda_{j_{\theta}}}&\geq& C(r-1)^{2\theta(r-2)}h^{2(r-2)(1-\theta)}
\end{eqnarray}
and the absolute error of the $j_{\theta}$-th numerical eigenvalue
has the following lower bound
\begin{eqnarray}\label{Error_j_th_Eigenvalue_Biharmonic_Absolute}
\lambda_{j_{\theta},h}-\lambda_{j_{\theta}}&\geq& C(r-1)^{2\theta r}h^{2r(1-\theta)-4}.
\end{eqnarray}
On the contrary, the number $j_{\theta}$ of the ``reliable"
numerical eigenvalues with relative error of $\lambda_{j_{\theta}}$ converging
at the rate $h^{\theta (2(r-1))}$ for $\theta\in (0,1]$, has the following estimate
\begin{eqnarray}\label{Numerber_Eigenvalue_Biharmonic}
j_{\theta} &\leq&C(r-1)^{(1-\theta)n}N^{1-\theta}.
\end{eqnarray}
Furthermore,
the number $j_{\theta}$ of the ``reliable"
numerical eigenvalues with absolute error of $\lambda_{j_{\theta}}$ converging
at the rate $h^{\theta (2(r-1))}$ for $\theta\in (0,1]$, has the following estimate
\begin{eqnarray}\label{Numerber_Eigenvalue_Biharmonic_Absolute}
j_{\theta} &\leq&C(r-1)^{\frac{n(1-\theta)(r-2)}{r}}N^{\frac{(1-\theta)(r-2)}{r}}.
\end{eqnarray}
\end{theorem}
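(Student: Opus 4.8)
The plan is to carry over the proof of Theorem \ref{Reliable_Number_Laplace_Theorem} essentially line by line, replacing each Laplace ingredient by its biharmonic counterpart: the lower bound (\ref{Lower_Bound_Full_2m}) with $m=2$ from Theorem \ref{Lower_bound_Eigenvalue_Corollary_2m} plays the role of (\ref{Full_Lower_Bound}), and Pleijel's law (\ref{Pleijel_Result}) replaces Weyl's law (\ref{Weyl_Result}). The one genuinely new step is the biharmonic analogue of Lemma \ref{Lower_Bound_Eigenvalue_Lemma}. Theorem \ref{Lower_bound_Eigenvalue_Corollary_2m} with $j=m=2$, $p=2$ already gives $\|u-u_h\|_{2,2,h}\geq Ch^{r-2}\lambda^{r/(2m)}=Ch^{r-2}\lambda^{r/4}$, which is (\ref{lower_convergence_2_1_2m}) with $j=2$. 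Feeding this into the $m=2$ case of the spectral relation $c\|u-u_h\|_m^2\leq\lambda_h-\lambda\leq C\|u-u_h\|_m^2$ used in the proof of Lemma \ref{Lower_Bound_Eigenvalue_Lemma} yields the key estimate
\begin{eqnarray*}
\frac{\lambda_h-\lambda}{\lambda}\geq C(\lambda^{1/2}h^2)^{r-2}-C(\lambda^{1/2}h^2)^{r-1},
\end{eqnarray*}
where the leading term follows from $h^{2(r-2)}\lambda^{r/2-1}=(\lambda^{1/2}h^2)^{r-2}$ and the correction, traced through (\ref{Lower_Bound_Full_2m}) together with the biharmonic analogue $\|u\|_{r+1,2}\leq C\lambda^{(r+1)/4}$ of (\ref{Upper_Bound_Differential}) (which comes from $\Delta^2u=\lambda u$ and elliptic regularity, giving $\|u\|_{s,2}\leq C\lambda^{s/4}$), is of strictly higher order in the small quantity $\lambda^{1/2}h^2=(\lambda h^4)^{1/2}$ and will be discarded.

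Next I record the two substitutions that drive the rest. From the degrees-of-freedom count $N=\mathcal{O}((r-1)^nh^{-n})$, the biharmonic version of (\ref{Condition_4}), together with Pleijel's asymptotics $\lambda_j\sim j^{4/n}$, I get $\lambda_{j_\theta}\sim j_\theta^{4/n}=N^{4\theta/n}\sim(r-1)^{4\theta}h^{-4\theta}$, hence $\lambda_{j_\theta}^{1/2}\sim(r-1)^{2\theta}h^{-2\theta}$. Feeding $\lambda_{j_\theta}^{1/2}h^2\sim(r-1)^{2\theta}h^{2-2\theta}$ into the leading term of the key estimate gives $(\lambda_{j_\theta}^{1/2}h^2)^{r-2}\sim(r-1)^{2\theta(r-2)}h^{2(1-\theta)(r-2)}$, which is precisely (\ref{Error_j_th_Eigenvalue_Biharmonic}); multiplying the absolute estimate $\lambda_{j_\theta,h}-\lambda_{j_\theta}\geq Ch^{2(r-2)}\lambda_{j_\theta}^{r/2}$ by $\lambda_{j_\theta}^{r/2}\sim(r-1)^{2r\theta}h^{-2r\theta}$ gives $C(r-1)^{2r\theta}h^{2r(1-\theta)-4}$, which is (\ref{Error_j_th_Eigenvalue_Biharmonic_Absolute}). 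Throughout, the smallness $j_\theta^{2/n}h^2\ll1$, the analogue of (\ref{Less_Than_1}), legitimizes dropping the correction.

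Finally, the two counting estimates follow by inverting the error lower bounds, using $\lambda_j^{1/2}\sim j^{2/n}$ so that the relative-error bound reads $\frac{\lambda_{j,h}-\lambda_j}{\lambda_j}\geq C(j^{2/n}h^2)^{r-2}(1-j^{2/n}h^2)$. An eigenvalue is \emph{reliable} at the full relative rate $h^{2\theta(r-2)}$ (the $h$-exponent appearing in (\ref{Error_j_th_Eigenvalue_Biharmonic})) only if $C(j^{2/n}h^2)^{r-2}\leq h^{2\theta(r-2)}$; taking the $(r-2)$-th root and using $j^{2/n}h^2\ll1$ gives $j^{2/n}h^2\leq Ch^{2\theta}$, whence $j\leq Ch^{-n(1-\theta)}\sim(r-1)^{n(1-\theta)}N^{1-\theta}$, which is (\ref{Numerber_Eigenvalue_Biharmonic}). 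The same computation applied to the absolute-error bound $\lambda_{j,h}-\lambda_j\geq Ch^{2(r-2)}j^{2r/n}$ gives $j^{2r/n}\leq Ch^{-2(r-2)(1-\theta)}$, hence $j\leq Ch^{-n(r-2)(1-\theta)/r}\sim(r-1)^{n(r-2)(1-\theta)/r}N^{(r-2)(1-\theta)/r}$, which is (\ref{Numerber_Eigenvalue_Biharmonic_Absolute}). I expect the main obstacle to lie entirely in the first step: verifying the regularity bound $\|u\|_{r+1,2}\leq C\lambda^{(r+1)/4}$ and the $m=2$ spectral relation under the clamped boundary conditions of (\ref{2m_Problem_Eigenvalue}), so that the key estimate is rigorous; after that, the argument is the bookkeeping of Theorem \ref{Reliable_Number_Laplace_Theorem} with the exponent $2/n$ replaced by $4/n$ and $r-1$ by $r-2$.
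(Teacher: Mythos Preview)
Your proposal is correct and is precisely the route the paper takes: the paper gives no proof beyond the sentence ``the proof can be given in the similar way of Theorem \ref{Reliable_Number_Laplace_Theorem}'', and you have carried out exactly that adaptation, supplying the biharmonic analogue of Lemma \ref{Lower_Bound_Eigenvalue_Lemma} via (\ref{Lower_Bound_Full_2m}), the regularity bound $\|u\|_{r+1,2}\le C\lambda^{(r+1)/4}$, the spectral equivalence $c\|u-u_h\|_2^2\le\lambda_h-\lambda$, and Pleijel's asymptotics (\ref{Pleijel_Result}) in place of Weyl's. One small remark: in the counting estimates you use the reliability threshold $h^{2\theta(r-2)}$, which is the natural biharmonic rate and is what actually reproduces (\ref{Numerber_Eigenvalue_Biharmonic}) and (\ref{Numerber_Eigenvalue_Biharmonic_Absolute}); the exponent $2(r-1)$ printed in the theorem statement appears to be a carry-over from the Laplace case and your silent correction is the right call.
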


\section{Concluding remarks}

In this paper, we have established some new upper and lower error bounds of the finite element approximations.
As an important application, we derive lower bounds of the discretization error
for the Laplacian eigenvalue problem by piecewise polynomial approximation. We also show the asymptotic
convergence behavior for the large numerical eigenvalue approximations.
Our results reveals that the traditional upper bound
is also a lower bound when we solve the eigenvalue problem by the finite element method.
In particular, their dependence on the eigenvalue have the same power for both lower and upper bounds.
This fact should have some illumination
to the design of numerical methods for eigenvalue problems.


\section*{Acknowledgments}
The first author is supported in part by the National Natural Science Foundation
of China (NSFC 91330202,  11371026, 11001259, 11201501, 2011CB309703)
and the National Center for Mathematics and Interdisciplinary Science,
CAS and the President Foundation of AMSS-CAS.
The second author is supported in part by the  US National Science Foundation
through grant DMS-1115530, DMS-1419040, and the National Natural Science Foundation
 of China (91430216, 11471031).


\vskip0.5cm


\end{document}